\newtheorem{theorem}{Theorem}[section]
\newtheorem{proposition}{Proposition}[section]
\newtheorem{corollary}{Corollary}[section]
\theoremstyle{definition}
\newtheorem{lemma}{Lemma}[section]
\theoremstyle{remark}
\theoremstyle{theoremA}
\newtheorem*{theoremA}{Theorem A}
\newcommand{\Div}{\mbox{div}_M}
\newcommand{\real}{\mathbb{R}}
\newcommand{\si}{\sigma}
\newcommand{\la}{\lambda}
\newcommand{\al}{\alpha}
\newcommand{\na}{\nabla}
\newcommand{\ep}{\epsilon}
\newcommand{\ga}{\gamma}
\newcommand{\be}{\beta}
\newcommand{\De}{\Delta}
\newcommand{\lan}{\langle}
\newcommand{\ran}{\rangle}
\newcommand{\p}{\partial}
\newcommand{\dv}{\mathrm{div}}
\newcommand{\hs}{\mathrm{Hess}}
\newcommand{\tr}{\mathrm{Tr\,}}
\newcommand{\rad}{\mathrm{rad}}
\newcommand{\II}{{I\!I}}
\newcommand{\spp}{\mathrm{supp}\,}
\title[Hardy and Rellich Inequalities on Submanifolds]{Hardy and Rellich Inequalities for submanifolds in Hadamard spaces}
\author{M. Batista}
\address{Instituto de Matem\'atica, Universidade Fe\-deral de Alagoas, Macei\'o, AL, CEP 57072-970, Brazil}\email{mhbs@mat.ufal.br}
\author{H. Mirandola}
\address{Instituto de Matem\'atica, Universidade Fe\-deral do Rio de Janeiro, Rio de Janeiro, RJ, CEP 21945-970, Brasil} \email{mirandola@ufrj.br}
\author{F. Vit\'orio}
\address{Instituto de Matem\'atica, Universidade Fe\-deral de Alagoas, Macei\'o, AL, CEP 57072-970, Brazil}\email{feliciano@pos.mat.ufal.br}
\subjclass[2000]{Primary 53C42; Secondary 53B25}
\begin{document}
\maketitle
\begin{abstract} Some of the most known integral inequalities are the Sobolev, Hardy and Rellich inequalities in Euclidean spaces. In the context of submanifolds, the Sobolev inequality was proved by Michael-Simon \cite{MS} and Hoffman-Spruck \cite{HS}. %the later for submanifolds in Riemannian manifolds with bounded geometry. %The universal constant obtained by them depends only on the dimension of the ambient space. 
Since then, a sort of applications to the submanifold theory has been derived from those inequalities. Years later, Carron \cite{C} obtained a Hardy inequality for submanifolds in Hadamard spaces. In this paper, we prove the general Hardy and Rellich Inequalities for submanifolds in Hadamard spaces. Some applications are given and we also analyse the equality cases.
\end{abstract}

%\tableofcontents

\section{Introduction}

Over the years, geometers have been interested in understanding how integral inequalities imply geometric and topological obstructions on Riemannian manifolds. Under  this purpose,  some integral inequalities lead us to study positive solutions to critical singular quasilinear elliptic problems, sharp constants, existence, non-existence, rigidity and symmetry results for extremal functions on subsets in the Euclidean space. About these subjects, a comprehensive material can be found, for instance, in \cite{BT}, \cite{BN}, \cite{C}, \cite{CC}, \cite{CKN}, \cite{CW}, \cite{HS}, \cite{MS}, \cite{KO} and references therein.

In the literature, some of the most known integral inequalities are the Sobolev, Hardy and Rellich inequalities. The validity of these inequalities and their corresponding sharp constants on a given Riemannian manifold measures, in some suitable sense, how close that manifold is to an Euclidean space. These inequalities can be applied to obtain results such as comparison for the  volume growth, estimates of the essencial spectrum for the Schr\"{o}dinger operators, parabolicity,  among others properties (see, for instance, \cite{L, dCX, X}). 

In this paper, we obtain the general Hardy and Rellich Inequalities for submanifolds in Hadamard ambient spaces using an elementary and very efficient approach. Furthermore, we analyse the equalities cases. At the end of this paper, we give some applications. 

This paper is organized as follows. In the section  below, we introduce our notation and show some simple facts about isometric immersions that will be useful in the proofs of the present paper. In Section \ref{cptHardy} and \ref{cpt-Rellich}, we prove Hardy and Rellich Inequalities for submanifolds in Hadamard spaces. In Section \ref{sec-eq-blw} we study the equality cases and do some blowup analysis to show that all the integral in our inequalities converge. The later section is devoted to applications.

\section{Preliminaries}

Let us start recalling some basic concepts, notations and properties about submanifolds. First, let $M = M^k$ be a $k$-dimensional Riemannian manifold with (possibly nonempty) boundary $\p M$. Assume $M$ is isometrically immersed in a $n$-dimensional Hadamard space $\bar M^n$ (that means $\bar M$ is a complete simply-connected manifold with non positive sectional curvature). We will denote by $f:M\to \bar M$ the isometric immersion. By abuse of notation, sometimes, we will identify $f(x)=x$, for all $x\in M$. No restriction on the codimension of $f$ is required. Let $\lan\cdot,\cdot\ran$ denote the Riemannian metric on $\bar M$ and consider the same notation to the metric induced on $M$. Associated to these metrics, consider $\bar\na$ and $\na$ the Levi-Civita connections on $\bar M$ and $M$, respectively. It is well known that $\nabla_Y Z = (\bar \na_YZ)^\top$, where $\top$ means the orthogonal projection onto the tangent  bundle $TM$. The Gauss equation says $$\bar \na_Y Z = \na_Y Z + \II(Y,Z),$$
where $\II$ is a quadratic form named by  {\it second fundamental form}. The mean curvature vector is defined by $H=\tr_M \,\II$. 

Fixed $\xi\in \bar M$,  let $r_{\xi}=d_{\bar M}(\cdot\,,\xi)$ be the distance on $\bar M$ from $\xi$. Since $\bar M$ is a Hadamard space, by the Hessian comparison theorem (see Theorem 2.3 page 29 of \cite{PRS}), we have 
\begin{equation}\label{hes-comp}
\hs_{\,r}(v,v) \ge \frac{1}{r}(1-\lan \bar\na r,v\ran^2),
\end{equation} 
for all points in $\bar M^* = \bar M\setminus \{\xi\}$ and vector fields $v:\bar M^*\to T\bar M$ with $|v|=1$. Here, $\bar \na r$ denotes the gradient vector, on $\bar M$, of $r$.
For a vector field $Y:M\to T\bar M$, the  divergence of $Y$ on $M$ is given by
$$\Div Y = \sum_{i=1}^k \lan \bar\na_{e_i}Y, e_i\rangle,$$
where $\{e_1,\cdots, e_k\}$ denotes a local orthonormal frame on $M$.  By simple computations, one has 

\begin{lemma}\label{prop} Let $Y:M\to T\bar M$ be a vector field and $\psi \in C^1(M)$. The following items hold
\begin{enumerate}[(a)]
\item\label{prop-a} $\Div Y = \Div Y^\top - \langle H, Y\rangle;$
\medskip
\item\label{prop-b} $\Div (\psi Y) = \psi~\Div Y + \lan \na^M\psi, Y\rangle$,
\end{enumerate}
where $\na^M \psi$ denotes the gradient vector, on $M$, of $\psi$.
\end{lemma}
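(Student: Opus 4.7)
Both identities follow from standard computations using the Gauss formula and the Leibniz rule, so the proof is largely bookkeeping. I would fix a point $p\in M$ and a local orthonormal frame $\{e_1,\dots,e_k\}$ on $M$ near $p$, and in (a) further decompose $Y=Y^\top+Y^\perp$ into its tangent and normal components along $f$.

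\textbf{Proof of (a).} Write $\Div Y=\sum_i\langle\bar\na_{e_i}Y^\top,e_i\rangle+\sum_i\langle\bar\na_{e_i}Y^\perp,e_i\rangle$. For the tangential piece, the Gauss equation gives $\bar\na_{e_i}Y^\top=\na_{e_i}Y^\top+\II(e_i,Y^\top)$, and since $\II(e_i,Y^\top)$ is normal while $e_i$ is tangent, its inner product with $e_i$ vanishes; summing in $i$ yields $\Div Y^\top$. For the normal piece, I would differentiate the identity $\langle Y^\perp,e_i\rangle=0$ in the direction $e_i$ to get
\[
0=\langle\bar\na_{e_i}Y^\perp,e_i\rangle+\langle Y^\perp,\bar\na_{e_i}e_i\rangle,
\]
and substitute $\bar\na_{e_i}e_i=\na_{e_i}e_i+\II(e_i,e_i)$; the tangential term $\na_{e_i}e_i$ pairs trivially with $Y^\perp$, so summing gives $\sum_i\langle\bar\na_{e_i}Y^\perp,e_i\rangle=-\langle Y^\perp,H\rangle=-\langle Y,H\rangle$ (the last equality because $H$ is normal). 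Adding the two pieces yields (a).

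\textbf{Proof of (b).} This is immediate from the Leibniz rule for $\bar\na$: $\bar\na_{e_i}(\psi Y)=(e_i\psi)Y+\psi\bar\na_{e_i}Y$. Taking the inner product with $e_i$ and summing, I use that $\sum_i(e_i\psi)\langle Y,e_i\rangle=\langle\na^M\psi,Y^\top\rangle=\langle\na^M\psi,Y\rangle$ since $\na^M\psi$ is tangent to $M$, which gives the stated formula.

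\textbf{Main obstacle.} Essentially none: the only subtle point is being careful that $\Div$ here acts on ambient-valued vector fields, so that the identity in (a) legitimately produces the intrinsic divergence $\Div Y^\top$ plus the mean-curvature correction. Both parts are pointwise computations, independent of the choice of frame.
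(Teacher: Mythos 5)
Your proof is correct; the paper itself omits the argument entirely, stating the lemma after the phrase ``by simple computations,'' and your calculation is exactly the standard one being alluded to: splitting $Y=Y^\top+Y^\perp$, killing the $\II(e_i,Y^\top)$ term against $e_i$, and extracting $-\langle Y^\perp,H\rangle$ by differentiating $\langle Y^\perp,e_i\rangle=0$, plus the Leibniz rule for (b). Nothing further is needed.
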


Henceforth, we will denote by $X:M\to T\bar M$ the radial vector $X = r\bar \na r$. We see that $X$ is continuous, with $|X|=r$ and differentiable in $M\setminus \{\xi\}$. The following lemma holds

\begin{lemma}\label{formula}
Let $\ga$ and $p$ be real numbers and $\psi\in C^1(M)$. The isometric immersion $f:M\to \bar M$ satisfies: 
\begin{equation}\label{deriv_X}
\dv_M(\frac{\psi^p X^\top}{|X|^\ga}) \ge \frac{\psi^p}{|X|^\ga}[k-\ga+\frac{\ga |X^\perp|^2}{|X|^2}]  + \lan \, p \na^M \psi + \psi H, \frac{\psi^{p-1} X}{|X|^\ga}\ran
\end{equation}
and
\begin{equation}\label{lapl_X}
\frac{1}{2-\ga}\Delta(\frac{1}{|X|^{\ga-2}}) \ge \frac{k-\ga}{|X|^\ga} + \ga~\frac{|X^\perp|^2}{|X|^{\ga+2}} + \lan\frac{X}{|X|^\ga}, H\ran, 
\end{equation}
where $(\cdot)^\perp$ means the orthogonal projection on the normal bundle of $M$. Moreover, for both inequalities, (\ref{deriv_X}) and (\ref{lapl_X}), their corresponding equalities occur if and only if the radial curvature $(K_{\rad})_{\xi}=0$ in $\spp(\psi)$ and $M$, respectively.  
\end{lemma}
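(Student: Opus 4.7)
The strategy is to view both inequalities as consequences of a single computation: a pointwise lower bound for $\Div (X/|X|^\ga)$, obtained from the Hessian comparison~(\ref{hes-comp}), followed by algebraic manipulations using Lemma~\ref{prop}.

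I would begin with~(\ref{deriv_X}). Writing $Y = \psi^p X/|X|^\ga$, part~(a) of Lemma~\ref{prop} gives $\Div Y^\top = \Div Y + \lan H, Y\ran$, and part~(b) applied with scalar factor $\psi^p$ and vector field $X/|X|^\ga$ yields
\[
\Div\!\left(\frac{\psi^p X^\top}{|X|^\ga}\right) = \psi^p\,\Div\!\left(\frac{X}{|X|^\ga}\right) + \lan p\,\na^M\psi + \psi H,\ \frac{\psi^{p-1}X}{|X|^\ga}\ran.
\]
It therefore suffices to establish the pointwise bound $\Div(X/|X|^\ga)\ge |X|^{-\ga}\bigl[k-\ga+\ga|X^\perp|^2/|X|^2\bigr]$ on $M\setminus\{\xi\}$.

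To prove that bound I would proceed in two steps. Using $X = r\bar\na r$ and $|X|=r$, differentiation along a local orthonormal frame $\{e_i\}$ of $M$ yields $\lan \bar\na_{e_i}X,e_i\ran = \lan \bar\na r, e_i\ran^2 + r\,\hs_r(e_i,e_i)$. Summing over $i$ and invoking~(\ref{hes-comp}) with $v=e_i$ produces $\Div X \ge k$. Next, applying part~(b) of Lemma~\ref{prop} with scalar $|X|^{-\ga}$ and vector field $X$, together with $\na^M|X|=(\bar\na r)^\top$ and the orthogonal-decomposition identity $\lan(\bar\na r)^\top,X\ran = r - |X^\perp|^2/r$ (which follows from $X^\perp = r(\bar\na r)^\perp$ and $|\bar\na r|=1$), one combines the two computations to get the desired bound, and hence~(\ref{deriv_X}).

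For~(\ref{lapl_X}) I would specialize~(\ref{deriv_X}) to $\psi\equiv 1$ and $p=0$, then identify the left-hand side as a Laplacian on $M$: since $\na^M(r^{2-\ga}) = (2-\ga)\,X^\top/|X|^\ga$, one has $\frac{1}{2-\ga}\Delta\bigl(1/|X|^{\ga-2}\bigr) = \Div(X^\top/|X|^\ga)$ on $M\setminus\{\xi\}$, and~(\ref{lapl_X}) follows directly. The only inequality introduced throughout the argument is~(\ref{hes-comp}), so the equality cases reduce to the equality case of the Hessian comparison theorem, which by Theorem~2.3 of~\cite{PRS} is equivalent to the vanishing of the radial sectional curvature $(K_{\rad})_\xi$---required on $\spp(\psi)$ for~(\ref{deriv_X}) and on all of $M$ for~(\ref{lapl_X}). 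The main delicate point is the bookkeeping of the tangent/normal decomposition that produces the $|X^\perp|^2/|X|^2$ correction with the correct sign.
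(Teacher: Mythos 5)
Your proposal is correct and follows essentially the same route as the paper: the Hessian comparison $\hs_r(e_i,e_i)\ge\frac1r(1-\lan\bar\na r,e_i\ran^2)$ summed over a tangent frame to get $\Div X\ge k$, then Lemma~\ref{prop}~(b) with the scalar $|X|^{-\ga}$ (producing the $\ga|X^\perp|^2/|X|^2$ term via $\na^M|X|=(\bar\na r)^\top$), Lemma~\ref{prop}~(a) for the $\lan H,\cdot\ran$ term, and a final application of~(b) with $\psi^p$; the identification $\frac{1}{2-\ga}\De(|X|^{2-\ga})=\Div(X^\top/|X|^\ga)$ for~(\ref{lapl_X}) and the reduction of the equality case to that of the Hessian comparison are likewise identical to the paper's argument, with only an immaterial reordering of the steps.
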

Before to prove Lemma \ref{formula}, let us recall the definition of radial sectional curvature. Let $x\in \bar M$ and, since $\bar M$ is complete, let $\ga:[0,t_0=r_{\xi}(x)]\to \bar M$ be a minimizing geodesic in $\bar M$ from $\xi$ to $x$. For all  orthonormal pair of vectors $Y,Z\in T_x\bar M$ we define $(\bar K_\rad)_{\xi}(Y,Z)=\lan \bar R(Y,\ga'(t_0))\ga'(t_0), Z\ran$. 

\begin{proof}[Proof of Lemma \ref{formula}] By the Hessian comparison theorem (\ref{hes-comp}), $\dv_M X \ge k$ and $\dv_M X=k$ if and only if $(K_{\rad})_\xi=0$ in $M$.  Furthermore, since $|X|=r$, the gradient vector $\na^M |X| = \frac{X^T}{|X|}$. By Lemma \ref{prop} \ref{prop-b}, and using that $|X|^2 = |X^\top|^2+|X^\perp|^2$, one has
\begin{equation*}
\Div (\frac{X}{|X|^\ga}) = \frac{1}{|X|^\ga}\Div X +\lan \na^M (\frac{1}{|X|^\ga}),~X\ran 
\ge \frac{k-\ga}{|X|^\ga} + \ga~\dfrac{|X^\perp|^2}{|X|^{\ga+2}}.
\end{equation*}
Thus, by Lemma \ref{prop} \ref{prop-a}, one has
\begin{equation*}
\Div(\frac{X^T}{|X|^\ga}) = \Div(\frac{X}{|X|^\ga}) + \lan  H,\frac{X}{|X|^\ga}\ran \ge  \frac{k-\ga}{|X|^\ga} + \ga~\frac{|X^\perp|^2}{|X|^{\ga+2}} + \lan\frac{X}{|X|^\ga}, H\ran.
\end{equation*}
Hence, using Lemma \ref{prop} \ref{prop-b}, Item (\ref{deriv_X}) follows.

Now, since $\na^M(\frac{1}{|X|^{\ga-2}}) = -(\ga-2)\,|X|^{-(\ga-2)-1}\, \na^M(|X|) = -(\ga-2)\,\frac{X^T}{|X|^{\ga}}$, one has $\De(\frac{1}{|X|^{\ga-2}}) = -(\ga-2)\, \Div(\frac{X^T}{|X|^\ga})$. Item (\ref{lapl_X}) follows. Lemma \ref{formula} is proved.
\end{proof}
\section{Hardy Inequalities for submanifolds}\label{cptHardy}

Carron \cite{C} proved the following result:
\begin{theoremA}[Carron]\label{carron-teo} Let $M^k$ be a complete non-compact  manifold, with $k\ge 3$, isometrically immersed in a Hadamard space $\bar M$. Let $r = d_{\bar M}(\cdot\,,\xi)$ be the distance on $\bar M$ from any fixed point $\xi\in \bar M$. Then, for all compactly supported function $\psi\in C_0^1(M)$, the following Hardy inequality holds:
\begin{equation*} \frac{(k-2)^2}{4} \int_M \frac{\psi^2}{r^2} \le \int_M \big[|\na^M \psi|^2 + \frac{(k-2){|H|}\psi^2}{2r}\big].
\end{equation*}
\end{theoremA}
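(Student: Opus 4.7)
The plan is to apply Lemma~\ref{formula}, specifically the divergence inequality (\ref{deriv_X}), with the parameter choices $p=2$ and $\ga=2$. This produces the factor $k-2$ naturally and sets up $\psi^2 X^\top/r^2$ as the relevant vector field to integrate. Writing $r=|X|$, (\ref{deriv_X}) becomes
\begin{equation*}
\dv_M\!\left(\frac{\psi^2 X^\top}{r^2}\right) \ge (k-2)\frac{\psi^2}{r^2} + \frac{2\psi^2 |X^\perp|^2}{r^4} + \frac{2\psi\lan\na^M\psi,X\ran}{r^2} + \frac{\psi^2 \lan H,X\ran}{r^2}.
\end{equation*}
Since $\psi$ has compact support, integrating over $M$ eliminates the divergence term, and discarding the nonnegative $|X^\perp|^2$ contribution yields
\begin{equation*}
(k-2)\int_M \frac{\psi^2}{r^2} \le -\,2\int_M \frac{\psi\lan\na^M\psi,X\ran}{r^2} - \int_M \frac{\psi^2\lan H,X\ran}{r^2}.
\end{equation*}

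Next, the pointwise bounds $|\lan\na^M\psi,X\ran|\le r\,|\na^M\psi|$ and $|\lan H,X\ran|\le r\,|H|$, immediate from $|X|=r$ and Cauchy--Schwarz, give
\begin{equation*}
(k-2)\int_M \frac{\psi^2}{r^2} \le 2\int_M \frac{|\psi|\,|\na^M\psi|}{r} + \int_M \frac{|H|\psi^2}{r}.
\end{equation*}
To finish I would apply Young's inequality with the sharp weight $\la=(k-2)/2$ (which is positive precisely because $k\ge 3$), namely
\begin{equation*}
2\,\frac{|\psi|\,|\na^M\psi|}{r} \le \frac{k-2}{2}\,\frac{\psi^2}{r^2} + \frac{2}{k-2}\,|\na^M\psi|^2,
\end{equation*}
absorb the resulting $\frac{k-2}{2}\int \psi^2/r^2$ into the left hand side, and multiply through by $(k-2)/2$. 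This recovers Theorem~A on the nose; note that the choice $\la=(k-2)/2$ is exactly what forces equality between the residual $L^2$-coefficient and the target constant $(k-2)^2/4$.

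The argument is elementary once Lemma~\ref{formula} is in hand. The only delicate point is that the vector field $\psi^2 X^\top/r^2$ is singular whenever $\xi\in f(M)$, so the divergence theorem cannot be applied naively. The standard remedy, which the paper defers to Section~\ref{sec-eq-blw}, is to excise a small geodesic ball $B_\ep(\xi)$ and control the flux through $f(M)\cap\p B_\ep(\xi)$. Since $|\psi^2 X^\top/r^2|\le \|\psi\|_\infty^2/\ep$ on that boundary while its $(k-1)$-dimensional volume grows at most like $\ep^{k-1}$ by standard comparison in Hadamard spaces, the flux is $O(\ep^{k-2})$, which vanishes as $\ep\to 0$ precisely because $k\ge 3$. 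This blowup step is the only real technical obstacle in the proof.
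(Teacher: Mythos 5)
Your argument is correct and is essentially the paper's own route: the paper obtains Theorem A as the case $p=\ga=2$ of Theorem \ref{teo-hardy}, whose proof is exactly your sequence of steps --- Lemma \ref{formula} applied to $\psi^2 X^\top/|X|^2$, Cauchy--Schwarz plus Young's inequality with the optimized weight (the paper's $\ep^q=k-\ga$ reduces to your $\la=(k-2)/2$), and the divergence theorem after excising a small ball around $\xi$ with the flux estimate $O(\ep^{k-2})$ as in (\ref{stokes}). The only point worth making explicit is that absorbing the term $\frac{k-2}{2}\int_M \psi^2/r^2$ into the left-hand side presupposes its finiteness, which is supplied by the coarea estimate (\ref{conv-int-M}) in Section \ref{sec-eq-blw}.
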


For the special case that $M$ is a minimal submanifold in Hadamard manifold $\bar M$, Theorem A was generalized by Bianchini, Mari and Rigoli \cite{bmr} in order to obtain some Yamabe type equations.

It is worthwhile to recall that in the present paper we are assuming $M$ compact with (possibly nonempty) boundary $\p M$. Comparing with assumptions in Theorem A, if $M$ were assumed complete and non-compact then, for a given $\psi\in C^1_0(M)$, we could consider a compact manifold $M'$ with smooth boundary $\p M'$ satisfying $\spp(\psi)\subset M'\subset M$.

Our first result is a Hardy-type inequality for submanifolds that generalizes of Theorem A. As a consequence of Theorem \ref{teo-hardy-norm}, it follows that the equality in Carron's Theorem occurs if and only if $\psi=0$ in $M$.

\begin{theorem}\label{teo-hardy}
Let $M^k$ be a compact manifold with (possibly nonempty) boundary $\p M$. Assume $M$ is isometrically immersed in a Hadamard manifold $\bar M$ and let $r = d_{\bar M}(\cdot\,,\xi)$ be the distance in $\bar M$ from any fixed point $\xi$. Let $1\le p < \infty$ and $-\infty<\ga<k$. For all $0\le \psi\in C^1(M)$, it holds
\begin{align}
\frac{(k-\ga)^p}{p^p}\int_M \frac{\psi^p}{r^\ga} &+ \frac{(k-\ga)^{p-1}}{p^{p-1}}\int_M \frac{\psi^p}{r^\ga}\Phi^+ 
\le \int_M \frac{ |\na^M \psi|^p}{r^{\ga-p}} \nonumber\\& + \frac{(k-\ga)^{p-1}}{p^{p-1}}\int_M \frac{\psi^p}{r^\ga}\Phi^- + \frac{(k-\ga)^{p-1}}{p^{p-1}}\,\int_{\p M}\frac{\psi^p}{r^{\ga-1}}\lan \bar\na r,\nu\ran, \label{des-hardy}
\end{align}
%provided $\int_{\p M} \frac{\psi^p}{|X|^{\ga-1}}$ exists, 
where $\Phi = \ga|\bar \na r^\perp|^2 + r\lan \bar\na r,H\ran$. Here, $\Phi^+,\Phi^-$ denote the positive and negative parts of $\Phi$, respectively, and $\nu$ is the exterior conormal to $\p M$. Moreover, if $p>1$, the equality above holds if and only if $\psi=0$ in $M$.
\end{theorem}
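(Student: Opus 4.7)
My plan is to integrate the pointwise inequality (\ref{deriv_X}) of Lemma \ref{formula} and then pass to the stated integral form via a H\"older step followed by a weighted Young inequality with the unique sharp weight. In (\ref{deriv_X}) I rewrite $|X^\perp|^2/|X|^2$ as $|\bar\na r^\perp|^2$, $|X|=r$, and $\langle H,X\rangle$ as $r\langle H,\bar\na r\rangle$, so the curvature and mean-curvature terms combine exactly into $\Phi=\ga|\bar\na r^\perp|^2+r\langle\bar\na r,H\rangle$. Integrating over $M$ and applying the divergence theorem to $\psi^p X^\top/r^\ga$ (noting $\langle X^\top,\nu\rangle=\langle X,\nu\rangle=r\langle\bar\na r,\nu\rangle$ along $\p M$, since $\nu\in TM$), then bounding $-\langle\na^M\psi,X\rangle\le r|\na^M\psi|$ by Cauchy--Schwarz and splitting $\Phi=\Phi^+-\Phi^-$, produces
\[
(k-\ga)A+P\ \le\ p\int_M\psi^{p-1}|\na^M\psi|/r^{\ga-1}+N+J,
\]
where $A=\int_M\psi^p/r^\ga$, $P=\int_M\psi^p\Phi^+/r^\ga$, $N=\int_M\psi^p\Phi^-/r^\ga$, and $J=\int_{\p M}\psi^p\langle\bar\na r,\nu\rangle/r^{\ga-1}$. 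For $p=1$ this is already (\ref{des-hardy}).

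For $p>1$, I factor the gradient integrand as $(\psi/r^{\ga/p})^{p-1}\cdot|\na^M\psi|/r^{(\ga-p)/p}$ and apply H\"older with conjugate exponents $p/(p-1)$ and $p$ to bound the gradient integral by $A^{(p-1)/p}B^{1/p}$, where $B=\int_M|\na^M\psi|^p/r^{\ga-p}$. The decisive step is then the weighted Young inequality
\[
p\,A^{(p-1)/p}B^{1/p}\ \le\ (p-1)\,\ep^{p/(p-1)}A+\ep^{-p}B
\]
evaluated at the unique critical weight $\ep^{p/(p-1)}=(k-\ga)/p$, equivalently $\ep^p=(k-\ga)^{p-1}/p^{p-1}$. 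With this choice, $(k-\ga)A-(p-1)\ep^{p/(p-1)}A$ collapses to $(k-\ga)A/p$, and multiplying the resulting inequality through by $\ep^p$ produces exactly the coefficients $(k-\ga)^p/p^p$ and $(k-\ga)^{p-1}/p^{p-1}$ appearing in (\ref{des-hardy}); any other weight yields a strictly worse constant.

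For the equality assertion when $p>1$, I retrace: equality in H\"older and in Young together forces $|\na^M\psi|=((k-\ga)/p)\,\psi/r$ a.e.\ on $\spp(\psi)$; equality in the Cauchy--Schwarz step forces $\na^M\psi$ to be antiparallel to $X^\top$; and equality in (\ref{deriv_X}) forces $(K_\rad)_\xi=0$ on $\spp(\psi)$ by Lemma \ref{formula}. A maximum principle argument on $\psi\ge 0$ (at any interior maximum $\na^M\psi=0$, whence $\psi=0$ there by the ODE, hence $\psi\equiv 0$) concludes. The main obstacle I anticipate is justifying the divergence step when $\xi\in M$: the vector field $\psi^p X^\top/r^\ga$ is singular at $\xi$, so one must first integrate over $M\setminus B_\de(\xi)$ and take $\de\to 0$, a passage requiring the blowup/convergence analysis deferred to Section \ref{sec-eq-blw}.
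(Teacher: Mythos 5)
Your derivation of the inequality itself is correct and is essentially the paper's argument in a slightly reshuffled order: the paper applies Cauchy--Schwarz and Young pointwise to the term $\lan p\na^M\psi,\psi^{p-1}X/|X|^\ga\ran$ in (\ref{deriv_X}) and only then integrates, while you integrate first and run H\"older--Young on the integrals $A$ and $B$. The optimization over the weight is the same ($\ep^{q}=k-\ga$ up to normalization), the constants agree, and both routes lean on the same divergence-theorem-with-cutoff step near $\xi$ justified by the blowup estimates (\ref{conv-int-M})--(\ref{conv-int-bry}). So far, no objection.

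The equality case for $p>1$ is where your proposal has a genuine gap. Your closing step is a maximum principle: ``at any interior maximum $\na^M\psi=0$, whence $\psi=0$ there, hence $\psi\equiv 0$.'' But $M$ is compact \emph{with boundary}, and nothing forces the maximum of $\psi$ to be interior; if $\max_M\psi$ is attained only on $\p M$, the argument says nothing. This is not a removable technicality. The pointwise conditions you extract from equality (antiparallelism $\na^M\psi=-\lambda X^\top$ with $X^\perp=0$, together with $|\na^M\psi|=\frac{k-\ga}{p}\frac{\psi}{r}$) integrate, on any component of $\{\psi>0\}$ avoiding $\xi$, to
\begin{equation*}
\psi = D\,r^{-\frac{k-\ga}{p}}, \qquad D>0,
\end{equation*}
which is a perfectly good positive $C^1$ function attaining its maximum on the inner boundary. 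Worse, if one takes $M$ to be a flat annulus $\{a\le r\le b\}\subset\real^k\subset\bar M=\real^n$ (so $(\bar K_{\rad})_\xi=0$, $X^\perp=0$, $H=0$, $\Phi=0$), a direct computation shows this $\psi$ makes \emph{every} link in your chain an equality and the boundary integral $\int_{\p M}\frac{\psi^p}{r^{\ga-1}}\lan\bar\na r,\nu\ran$ vanishes identically, so equality in (\ref{des-hardy}) genuinely holds with $\psi\not\equiv 0$. Hence no argument that uses only the pointwise equality conditions can reach the conclusion $\psi\equiv 0$; some global input is indispensable. The paper's own proof proceeds differently here: from (\ref{eq-cauchy}) and (\ref{eq-young}) it passes to the explicit form (\ref{expo}) of $\psi$ and derives a contradiction by computing $\dv_M(\psi^pX^\top/|X|^\ga)$ in two ways; you should note, however, that this hinges on the passage from (\ref{div0}) to (\ref{radial}) (where a factor of $|X|^{-2}$ appears to be needed), so the delicate point you skipped is precisely the delicate point of the paper as well.
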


To see how Theorem \ref{teo-hardy} generalizes Theorem A above, if $\ga\ge 0$ then 
\begin{align*}
&\Phi^+  = \ga|\bar \na r^\perp|^2 + r\lan \bar\na r,H\ran^+  \ge \ga|\bar \na r^\perp|^2 \\
&\Phi^-  = r\lan \bar\na r,H\ran^- \le r|\lan \bar\na r,H\ran|.
\end{align*}
For $p=\ga=2$, it holds $\int_M \frac{\psi^2}{r^2}\Phi^- \le \int_M \frac{\psi^2}{r}|H|$, hence Theorem A follows as Theorem \ref{teo-hardy}. Furthermore, as a consequence of Theorem \ref{teo-hardy}, it holds:

\begin{corollary} \label{teo-hardy-carron}
Let $M^k$ be a compact manifold with (possibly nonempty) boundary $\p M$. Assume $M$ is isometrically immersed in a Hadamard manifold $\bar M$ and let $r = d_{\bar M}(\cdot\,,\xi)$ be the distance in $\bar M$ from a fixed point $\xi\in \bar M$. Let $1\le p < \infty$ and $-\infty<\ga<k$. For all $0\le \psi \in C^1(M)$, it holds
\begin{eqnarray*}
\frac{(k-\ga)^p}{p^p} \int_M \frac{\psi^p}{r^\ga} + \frac{\ga (k-\ga)^{p-1}}{p^{p-1}} \int_M \frac{\psi^p}{r^{\ga}}|\bar\na r^\perp|^2 &\le& \int_M \frac{ |\na^M \psi|^p}{r^{\ga-p}}\nonumber\\
&& \hspace{-7cm} \ +\ \frac{(k-\ga)^{p-1}}{p^{p-1}} \int_M \frac{\psi^p}{r^{\ga-1}} |\lan H,\bar\na r\ran| \ +\ \frac{(k-\ga)^{p-1}}{p^{p-1}}\,\int_{\p M}\frac{\psi^p}{r^{\ga-1}}\lan \bar\na r,\nu\ran.
\end{eqnarray*}
%provided $\int_{\p M} \frac{\psi^p}{|X|^{\ga-1}}$ exists. 
Moreover, if $p>1$, the equality above holds if and only if $\psi=0$ in $M$.
\end{corollary}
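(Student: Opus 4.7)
The corollary is a direct consequence of Theorem \ref{teo-hardy}: the plan is to collapse the two $\Phi^\pm$-integrals into a single $\Phi$-integral and then bound the mean-curvature contribution by its absolute value. Concretely, I would apply Theorem \ref{teo-hardy} to the same data $(\psi,p,\ga)$ and transpose the $\Phi^-$ integral in (\ref{des-hardy}) to the left-hand side. Using the identity $\Phi^+-\Phi^-=\Phi=\ga|\bar\na r^\perp|^2+r\lan\bar\na r,H\ran$, the resulting left-hand side becomes
\[
\frac{(k-\ga)^p}{p^p}\int_M \frac{\psi^p}{r^\ga} + \frac{\ga(k-\ga)^{p-1}}{p^{p-1}}\int_M \frac{\psi^p}{r^\ga}|\bar\na r^\perp|^2 + \frac{(k-\ga)^{p-1}}{p^{p-1}}\int_M \frac{\psi^p}{r^{\ga-1}}\lan \bar\na r,H\ran,
\]
while the right-hand side keeps only the $|\na^M\psi|^p$-term and the boundary integral from (\ref{des-hardy}).

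Next, since $k-\ga>0$, the weight $\frac{(k-\ga)^{p-1}}{p^{p-1}}\,\frac{\psi^p}{r^{\ga-1}}$ is non-negative, so the pointwise bound $\lan \bar\na r,H\ran\ge -|\lan H,\bar\na r\ran|$ allows me to transfer the mean-curvature integral to the right-hand side, replacing it by $|\lan H,\bar\na r\ran|$. This produces exactly the inequality asserted in the corollary. The argument works uniformly for every $\ga\in(-\infty,k)$: when $\ga<0$, the second term on the left is non-positive and simply weakens the bound.

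Finally, for the equality case with $p>1$: the step from (\ref{des-hardy}) to my intermediate form is a pure rearrangement (the identity $\Phi^+-\Phi^-=\Phi$), so equality in the corollary forces equality in (\ref{des-hardy}) itself (as well as equality in $\lan\bar\na r,H\ran\ge -|\lan H,\bar\na r\ran|$ on the relevant support). By the equality clause of Theorem \ref{teo-hardy}, this yields $\psi\equiv 0$ on $M$. I anticipate no serious obstacle; the only delicate point is the sign bookkeeping, which is handled uniformly by using $\Phi^+-\Phi^-$ rather than estimating $\Phi^+$ and $\Phi^-$ separately (indeed $\Phi^+$ does not coincide with $\ga|\bar\na r^\perp|^2$ in general, so a naive piecewise bound would fail outside the regime $\ga\geq 0$).
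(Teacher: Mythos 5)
Your argument is correct and reaches the corollary by the same basic route as the paper: apply Theorem \ref{teo-hardy} and trade the $\Phi^{\pm}$ terms for $\ga|\bar\na r^\perp|^2$ and $r|\lan \bar\na r,H\ran|$ via the identity $\Phi^+-\Phi^-=\Phi$ and the bound $-r\lan\bar\na r,H\ran\le r|\lan\bar\na r,H\ran|$. The only difference is organizational, and it is in your favor: the paper splits into the cases $\ga\ge 0$ and $\ga<0$, handling the first by the pointwise claims $\Phi^+\ge\ga|\bar\na r^\perp|^2$ and $\Phi^-\le r|\lan\bar\na r,H\ran|$ (which, as you observe, are not literally valid pointwise, e.g.\ where $\ga|\bar\na r^\perp|^2>0$ but $\Phi<0$), and the second by exactly your rearrangement. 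Your uniform treatment collapses both cases into one correct computation; the one point you leave implicit, and should state, is that transposing $\int_M\frac{\psi^p}{r^\ga}\Phi^-$ across the inequality requires knowing these integrals are finite, which follows from the blowup estimate (\ref{conv-int-M}) (all the weights $\Phi^{\pm}$, $|\bar\na r^\perp|^2$, $r|\lan\bar\na r,H\ran|$ are bounded on the compact $M$). The equality discussion is also correct: the corollary's right-hand side exceeds that of (\ref{des-hardy}) by the nonnegative quantity $\frac{(k-\ga)^{p-1}}{p^{p-1}}\int_M\frac{\psi^p}{r^\ga}\bigl(r|\lan\bar\na r,H\ran|+r\lan\bar\na r,H\ran\bigr)$, so equality forces equality in (\ref{des-hardy}) and hence $\psi\equiv 0$ for $p>1$.
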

\begin{proof} If $\ga\ge 0$, Corollary \ref{teo-hardy-carron} follows from Theorem \ref{teo-hardy} togheter with the facts $\Phi^+\ge \ga|\bar\na r^\perp|$ and $\Phi^- \le r|\lan\bar\na r,H\ran|$. Now, if $\ga<0$ then $\int_M \frac{\psi^p}{r^\ga}$ exists. Thus, since $\Phi = \Phi^+-\Phi^- = \ga|\bar \na r^\perp|^2 + r\lan \bar\na r,H\ran$, by Theorem \ref{teo-hardy},
\begin{align*}
\frac{(k-\ga)^p}{p^p}\int_M \frac{\psi^p}{r^\ga} &+ \frac{(k-\ga)^{p-1}}{p^{p-1}}\int_M \frac{\psi^p}{r^\ga}\big[\ga|\bar \na r^\perp|^2 + r\lan \bar\na r,H\ran\big] 
\nonumber\\& \le \int_M \frac{ |\na^M \psi|^p}{r^{\ga-p}}  +  \frac{(k-\ga)^{p-1}}{p^{p-1}}\,\int_{\p M}\frac{\psi^p}{r^{\ga-1}}\lan \bar\na r,\nu\ran,
\end{align*}
Corollary \ref{teo-hardy-carron} follows from the triangular inequality, $-r\lan \bar\na r,H\ran\le r|\lan \bar\na r,H\ran|$.
\end{proof}

Now, we will prove Theorem \ref{teo-hardy}.
\begin{proof}[Proof of Theorem \ref{teo-hardy}] For convenience, we will prove this theorem by considering the radial vector field $X=r\bar\na r$. Set $0\le \psi \in C^1(M)$. 

First,  assume $p>1$.  Write $\ga=\al+\be+1$ with $\al,\be\in \real$. By Lemma \ref{formula},
\begin{align}
\dv_M(\frac{\psi^p X^\top}{|X|^\ga})& \ge \frac{\psi^p}{|X|^\ga}[k-\ga+\frac{\ga |X^\perp|^2}{|X|^2}]  + \lan p \na^M \psi + \psi H, \frac{\psi^{p-1} X}{|X|^\ga}\ran \nonumber
\\&= \frac{\psi^p}{|X|^\ga}[k-\ga + \frac{\ga |X^\perp|^2}{|X|^2}]  + \lan \frac{p \na^M \psi}{|X|^\al}, \frac{\psi^{p-1} X}{|X|^{\be+1}} \ran + \frac{\psi^p}{|X|^\ga}\lan H,X\ran.\label{cauchy-young-2}
\end{align}
Moreover, if the equality in (\ref{cauchy-young-2}) holds then the radial curvature $(K_{\rad})_{\xi}=0$ in $\spp\psi$. Thus, using Cauchy-Schwarz Inequality and Young Inequality with $\ep>0$, 
\begin{eqnarray}
\dv_M(\frac{\psi^p X^\top}{|X|^\ga}) &\ge& 
\frac{\psi^p}{|X|^\ga}\big[k-\ga + \frac{\ga |X^\perp|^2}{|X|^2} + \lan \vec{H},X\ran\big]\nonumber\\&&  - \frac{1}{p\ep^p}\frac{ |p\na^M \psi|^p}{|X|^{p\al}} - \frac{\ep^q}{q} \frac{\psi^{(p-1)q}}{|X|^{q\be}},\label{young}
\end{eqnarray}
where $p>1$ and $q=\frac{p}{p-1}$. 

Now, we take $\be$ satisfying $\be q = \ga$, that is, $\be  = (p-1)(\al+1)$. Since $\ga=\al+\be+1$, we see easily that $p\al = \ga-p$. Thus,
\begin{eqnarray}
h(\ep) \frac{\psi^p}{|X|^\ga} + \ep^p\frac{\psi^p}{|X|^\ga}\big[\ga \frac{|X^\perp|^2}{|X|^2} + \lan H,X\ran \big] \nonumber\\ &&\hspace{-2cm} \ \le \ p^{p-1} \frac{ |\na^M \psi|^p}{|X|^{\ga-p}}  + \ep^p\, \dv_M (\frac{\psi^p X^\top}{|X|^\ga}),\label{antes_max}
\end{eqnarray}
where $h(\ep)=\ep^p(k-\ga - \frac{\ep^q}{q})$. The function $h$ achieves its maximum at the instant $\ep = (k-\ga)^{\frac{p-1}{p}}$, with $h(\ep) = \frac{(k-\ga)^p}{p}$. Thus, it holds
\begin{equation}
\frac{(k-\ga)^p}{p^p} \frac{\psi^p}{|X|^\ga} + \frac{(k-\ga)^{p-1}}{p^{p-1}}\frac{ \psi^p\Phi}{|X|^\ga}\leq\ \frac{ |\na^M \psi|^p}{|X|^{\ga-p}} \ +\ \frac{(k-\ga)^{p-1}}{p^{p-1}}\,\dv_M \frac{\psi^p X^\top}{|X|^\ga},\label{before}
\end{equation}
where $\Phi = \ga \frac{|X^\perp|^2}{|X|^2} + \lan H,X\ran = \ga|\bar \na r^\perp|^2 + r\lan \bar\na r,H\ran $. We observe that (\ref{before}) remains valid for $p=1$, just considering (\ref{before}) with $p>1$ and taking $p\to 1$.

We claim that $\int_M \dv_M(\frac{\psi^p X^\top}{|X|^\ga}) = \int_{\p M} \frac{\psi^p}{|X|^\ga}\lan X,\nu \ran$ holds even when $0\in M$. In fact, if $0\notin M$, we can apply the divergence theorem directly. If $0\in M$, we consider a small regular value $r_0>0$ for the distance function $r=|X|$. We have 
\begin{eqnarray}
\int_{M\cap [|X|>r_0]}\dv_M(\frac{\psi^p X^\top}{|X|^\ga}) &=& \int_{[\p M \cap [|X|>r_0]]\cup [M\cap [|X|=r_0]]} \frac{\psi^p}{|X|^\ga}\lan X,\nu\ran\nonumber\\
&=&\int_{[\p M \cap [|X|>r_0]]} \frac{\psi^p}{|X|^\ga}\lan X,\nu\ran + O(r_0^{1-\ga})O(r_0^{k-1}), \label{stokes}
\end{eqnarray}
as $r_0\to 0$. Thus, since $k-\ga>0$ e $\int_{\p M} \frac{\psi^p}{|X|^{\ga-1}}$ exists (see (\ref{conv-int-bry}) in Section \ref{sec-eq-blw}), by the dominated convergence theorem, taking $r_0\to 0$, our claim follows. 

Using (\ref{before}) and (\ref{stokes}), 
\begin{eqnarray}
\frac{(k-\ga)^p}{p^p} \int_M \frac{\psi^p}{|X|^\ga} + \frac{(k-\ga)^{p-1}}{p^{p-1}} \int_M \frac{\psi^p}{|X|^\ga}\Phi^+ &\le& \int_M \frac{|\na^M \psi|^p}{|X|^{\ga-p}}\nonumber\\
&& \hspace{-7cm} \ +\ \frac{(k-\ga)^{p-1}}{p^{p-1}} \int_M \frac{\psi^p}{|X|^\ga}\Phi^- \ +\ \frac{(k-\ga)^{p-1}}{p^{p-1}}\,\int_{\p M}\frac{\psi^p}{|X|^\ga}\lan X,\nu\ran,\label{after}
\end{eqnarray}
where $\Phi^+$ and $\Phi^-$ denotes the positive and negative parts of $\Phi$ and $\nu$ is the outward pointing unit normal field of the boundary $\p M$. 

Now, assume the equality in (\ref{after}) holds. We will prove that $\psi=0$ in M. In fact, the equality in (\ref{after}) implies the radial seccional curvature $(\bar K_{\rad})_\xi = 0$ in $\spp\psi$ and also imply the equalities in the Cauchy and Young Inequalities, with $\ep^q=k-\ga$, holds in (\ref{young}). Thus, we obtain  
\begin{align}
\na^M \psi &=  -\lambda X, \mbox{ and } \label{eq-cauchy}\\
\frac{p^p |\na^M \psi|^p}{\ep^p |X|^{\al p}} &= \frac{\ep^q \psi^{(p-1)q}}{|X|^{\be q}},\label{eq-young}
\end{align}
where $\lambda\ge 0$ is a continuous function on $M$, and the constants $\al$ and $\be$ satisfy $\al p = \ga - p$ and $\be q = \ga$. Since $\ep^{p+q}=\ep^{pq}=(k-\ga)^p$, by (\ref{eq-young}),
\begin{equation}\label{div0}
\frac{(k-\ga)^p}{p^p}\frac{\psi^p}{|X|^\ga} = \frac{|\na^M \psi|^p}{|X|^{\ga-p}}.
\end{equation}
Hence, from (\ref{eq-cauchy}), 
\begin{equation}\label{radial}
\na^M\psi = -\frac{k-\ga}{p}\psi X.    
\end{equation}
In particular, it holds $X^T=X$ and $X = \frac{1}{2}\na^M(|X|^2)$. By contradiction, assume that the open subset $W = \{\psi>0\}$ is nonempty. Using (\ref{radial}) and the fact $X = \frac{1}{2}\na^M(|X|^2)$, one has  $\na^M(\log\psi + \frac{k-\ga}{2p}|X|^2)=0$, in $W$. Hence, 
\begin{equation}\label{expo}
\psi = D e^{-\frac{k-\ga}{2p}|X|^2},
\end{equation}
in $W$, for some constant $D>0$. This implies $\psi>0$ everywhere in $M$. Moreover, using that $X^\perp=0$, by (\ref{before}) (with the equality case) and (\ref{div0}), one also obtains 
\begin{equation}\label{conserv}
\dv_M(\frac{\psi^p X^T}{|X|^\ga})=0,
\end{equation}
everywhere in $M$. On the other hand, since $\psi^p = D^p e^{-\frac{k-\ga}{2}|X|^2}$, and again using $X^T=X$, it follows by (\ref{deriv_X}) in Lemma \ref{formula} (with equality since the radial curvature $(K_{\rad})_{\xi}=0$ in $M$), and (\ref{radial}), we obtain
\begin{align*}
\dv_M(\frac{\psi^p X^T}{|X|^\ga}) &= \frac{1}{|X|^\ga}[(k-\ga)\psi^p + p\psi^{p-1}\lan \na^M\psi,X\ran]\\
&= \frac{(k-\ga)\psi^p}{|X|^\ga}[1-|X|^2].
\end{align*}
Thus, by (\ref{conserv}), one has $\psi^p(1-|X|^2)=0$ everywhere in $M$, which is a contradiction, since $|X|^2=1$ would imply $X^T=0$ in $M$, hence $X=0$ everywhere in $M$. Therefore, $\psi=0$ in $M$. Theorem \ref{teo-hardy} is proved.
\end{proof}

It is straightforward to verify that with the same steps as in the proof of Theorem \ref{teo-hardy-carron} above, but applying in (\ref{cauchy-young-2}) Cauchy-Schwarz and Young Inequalities directly to $\lan p \na^M \psi + \psi H, \frac{\psi^{p-1} X}{|X|^\ga}\ran$ instead $\lan p \na^M \psi, \frac{\psi^{p-1} X}{|X|^\ga}\ran$, one can prove the following variance of Theorem \ref{teo-hardy-carron}.

\begin{theorem} \label{teo-hardy-norm} Under the same hypothesis of Theorem \ref{teo-hardy-carron}, for all $0\le \psi \in C^1(M)$, it holds
\begin{align*}
\frac{(k-\ga)^p}{p^p} \int_{M} \frac{\psi^p}{r^{\ga}} &+ \frac{\ga(k-\ga)^{p-1}}{p^{p-1}}\int_M \frac{\psi^p}{r^{\ga}}|\bar\na r^\perp|^2
\\&\le  \int_M \frac{1}{r^{\ga-p}}\big|\na^M \psi+\frac{\psi H}{p}\big|^p +  \frac{(k-\ga)^{p-1}}{p^{p-1}} \int_{\p M} \frac{\psi^p}{r^{\ga-1}}.
\end{align*}
%provided $\int_{\p M} \frac{\psi^p}{r^{\ga-1}}$ exists.
Moreover, if $p>1$, the equality holds if and only if $\psi=0$ in $M$.
\end{theorem}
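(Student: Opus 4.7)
The plan is to mirror the proof of Theorem \ref{teo-hardy}, modifying only the step that follows (\ref{cauchy-young-2}): rather than applying Cauchy--Schwarz and Young to $\lan p\na^M\psi,\tfrac{\psi^{p-1}X}{|X|^\ga}\ran$ alone (leaving the $\lan H,X\ran\psi^p/|X|^\ga$ contribution inside the pointwise error $\Phi$), I would apply them to the full combination $\lan p\na^M\psi+\psi H,\tfrac{\psi^{p-1}X}{|X|^\ga}\ran$. Cauchy--Schwarz produces the lower bound $-p|\na^M\psi+\tfrac{\psi H}{p}|\tfrac{\psi^{p-1}|X|}{|X|^\ga}$, so the $\lan H,X\ran$ term is absorbed into the gradient norm, which is the form demanded by the statement. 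Keeping the exponents $p\al=\ga-p$, $q\be=\ga$, $(p-1)q=p$ (with $q=p/(p-1)$) as in Theorem \ref{teo-hardy}, writing $|X|^{\ga-1}=|X|^\al|X|^\be$ and applying Young's inequality with parameter $\ep>0$ yield $p\bigl|\na^M\psi+\tfrac{\psi H}{p}\bigr|\tfrac{\psi^{p-1}}{|X|^{\ga-1}}\le\tfrac{1}{\ep^p}\tfrac{|\na^M\psi+\psi H/p|^p}{|X|^{\ga-p}}+\tfrac{p\ep^q}{q}\tfrac{\psi^p}{|X|^\ga}$. Substituting into (\ref{cauchy-young-2}), rearranging, and multiplying by $\ep^p$ puts the estimate in the form $\ep^p\bigl(k-\ga-\tfrac{p\ep^q}{q}\bigr)\tfrac{\psi^p}{|X|^\ga}+\ep^p\ga\tfrac{\psi^p|X^\perp|^2}{|X|^{\ga+2}}\le\tfrac{|\na^M\psi+\psi H/p|^p}{|X|^{\ga-p}}+\ep^p\dv_M\bigl(\tfrac{\psi^p X^T}{|X|^\ga}\bigr)$. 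The scalar $g(\ep):=\ep^p(k-\ga-p\ep^q/q)$ attains its maximum $(k-\ga)^p/p^p$ at $\ep^p=(k-\ga)^{p-1}/p^{p-1}$, producing the pointwise form of the target inequality; integrating over $M$ and invoking the divergence-theorem computation (\ref{stokes}) together with $\lan X,\nu\ran\le|X|=r$ on $\p M$ finishes the integral step.

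For the equality case ($p>1$), tracing back the chain of inequalities yields, on $W=\{\psi>0\}$: $(K_{\rad})_\xi=0$; $p\na^M\psi+\psi H=-(k-\ga)\psi X$ (Cauchy--Schwarz with the correct sign); $|p\na^M\psi+\psi H|=(k-\ga)\psi|X|$ (Young); and $\lan X,\nu\ran=|X|$ on $\p M\cap\spp\psi$. The tangential part of the vector equality, $\na^M\psi=-\tfrac{k-\ga}{p}\psi X^T$, integrates as in Theorem \ref{teo-hardy} to $\psi=D\exp(-\tfrac{k-\ga}{2p}|X|^2)$ on each connected component of $W$; assuming $M$ connected one concludes $W=M$ and $\psi>0$ throughout.

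The main obstacle is that, unlike in Theorem \ref{teo-hardy}, Cauchy--Schwarz equality no longer forces $X^\perp=0$: the normal component $\psi H$ is free to balance a nonzero $X^\perp$. To close the argument, I would use the equality form of (\ref{cauchy-young-2}) with the relations above to derive the pointwise identity $\dv_M\bigl(\tfrac{\psi^p X^T}{|X|^\ga}\bigr)=\tfrac{\psi^p}{|X|^\ga}\bigl[(k-\ga)(1-|X|^2)+\ga|X^\perp|^2/|X|^2\bigr]$, integrate it against the equality version of the theorem's inequality (using the boundary equality once more), and reduce everything to $\int_M\tfrac{\psi^p}{|X|^\ga}\bigl[|X|^{2p}-p|X|^2+(p-1)\bigr]=0$. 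The polynomial $f(t)=t^p-pt+(p-1)$ is convex on $t\ge 0$ with unique minimum $f(1)=0$, so $|X|\equiv 1$ on $M$; then $X^T=\tfrac{1}{2}\na^M|X|^2\equiv 0$, and the boundary equality $\lan X,\nu\ran=|X|$ collapses to $\lan X^T,\nu\ran=0=1$ on $\p M\cap\spp\psi$, the sought contradiction, forcing $\psi\equiv 0$.
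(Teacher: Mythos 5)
Your derivation of the inequality itself is correct and is exactly the route the paper intends: apply Cauchy--Schwarz and Young to the full vector $p\na^M\psi+\psi H$ in (\ref{cauchy-young-2}) rather than to $p\na^M\psi$ alone, keep the exponents $p\al=\ga-p$, $q\be=\ga$, optimize in $\ep$ (your $g$ is indeed maximized at $\ep^q=(k-\ga)/p$, giving $g(\ep)=(k-\ga)^p/p^p$ and $\ep^p=(k-\ga)^{p-1}/p^{p-1}$), and finally estimate the boundary term via $\lan X,\nu\ran\le |X|$. That part needs no changes and matches the paper.

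The equality case is where the gap lies. You correctly identified the genuine difficulty --- Cauchy--Schwarz equality now reads $p\na^M\psi+\psi H=-\mu X$ and no longer forces $X^\perp=0$ --- but your repair rests on a wrong magnitude. Equality in your Young step means $a^p=b^q$ with $a=|\na^M\psi+\tfrac{\psi H}{p}|/(\ep|X|^{\al})$ and $b=\ep\psi^{p-1}/|X|^{\be}$; since $p\al-q\be=(\ga-p)-\ga=-p$, this yields
\[
\big|\na^M\psi+\tfrac{\psi H}{p}\big|=\ep^{q}\,\frac{\psi}{|X|}=\frac{k-\ga}{p}\,\frac{\psi}{|X|},
\]
i.e.\ $|p\na^M\psi+\psi H|=(k-\ga)\psi/|X|$, not $(k-\ga)\psi|X|$ (a dimensional check confirms this: a gradient scales like $\psi/r$). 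Hence $\mu=(k-\ga)\psi/|X|^2$, the vector identity is $p\na^M\psi+\psi H=-(k-\ga)\psi\,\bar\na r/r$, and the tangential part integrates to $\psi=D\,r^{-(k-\ga)/p}$ rather than a Gaussian. Feeding the correct relation into the equality form of (\ref{cauchy-young-2}) gives $\lan p\na^M\psi+\psi H,\psi^{p-1}X/|X|^\ga\ran=-(k-\ga)\psi^p/|X|^\ga$ and therefore
\[
\dv_M\Big(\frac{\psi^pX^\top}{|X|^\ga}\Big)=\ga\,\frac{\psi^p|X^\perp|^2}{|X|^{\ga+2}},
\]
with no $(k-\ga)(1-|X|^2)$ term. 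Integrating this and using $\lan X,\nu\ran=|X|$ on $\p M\cap\spp\psi$ only reproduces the balance between the boundary integral and the $|X^\perp|^2$ integral that equality already asserts; your polynomial $t^p-pt+(p-1)$ never appears and no contradiction results. So the equality case is not closed by your argument: a correct treatment has to extract the contradiction from the remaining constraints ($\psi=Dr^{-(k-\ga)/p}$, $\psi H=-(k-\ga)\psi\bar\na r^{\perp}/r$, $(K_{\rad})_\xi=0$, and $\nu=\bar\na r$ along $\p M$), not from a pointwise identity of the above type. For what it is worth, the paper's own equation (\ref{radial}) contains the same normalization slip ($\na^M\psi=-\tfrac{k-\ga}{p}\psi X$ instead of $-\tfrac{k-\ga}{p}\psi X/|X|^2$), and the paper supplies no argument at all for the equality case of this theorem, so you are not alone here --- but the step as you wrote it would fail.
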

\section{Rellich Inequalities for submanifolds}\label{cpt-Rellich}

\begin{theorem}\label{rellich} Let $M^k$ be a compact manifold with (possibly nonempty) boundary $\p M$. Assume $M$ is isometrically immersed in a Hadamard manifold $\bar M$. Let $r=d_{\bar M}(\cdot\,,\xi)$ be the distance in $\bar M$ from a fixed point $\xi\in \bar M$. Let $p\ge 1$ and $2<\ga<k$. For the special case that $\bar M$ has  radial curvature $(\bar K_{\rad})_{\xi}=0$ (e.g., $\bar M=\real^n$), we relax the hypothesis about $\ga$ by assuming $2-(p-1)(k-2)<\ga<k$. For all $0\le \psi \in C^1(M)$, it holds:
\begin{equation}\label{ineq-rellich}
A\int_M \frac{\psi^p}{r^\ga} + B\int_M \frac{\psi^p}{r^\ga}\Phi^+ \le \int_M \frac{|\De\psi|^p}{r^{\ga-2p}} + B\int_M \frac{\psi^p}{r^{\ga}}\Phi^- + \int_{\p M} \frac{\psi^{p-1}}{r^{\ga-2}}\lan W,\nu\ran.
\end{equation}
%provided $\int_{\p M} \frac{\psi^{p-1}}{r^{\ga-2}}\lan W,\nu\ran$ exists.  
Here $\Phi = \ga|\bar\na r^\perp|^2 + r\lan \bar\na r,H \ran$, and 
\begin{align*}
A &= A(k,\ga,p) = \frac{E^p}{p^p} =  \frac{(k-\ga)^p}{p^{2p}}\big[\ga-2+(p-1)(k-2)\big]^p>0,\\
B &= B(k,\ga,p)  =  \frac{E^{p-1}}{p^{p-1}}\big[\frac{E}{k-\ga} + \frac{p-1}{p}\big]>0, \ \ \mbox{ and }\\
W &= W(k,\ga,p)  = \frac{ E^{p-1}}{p^{p-2}}\na^M\psi + B\frac{\psi\bar\na r^T}{r},
\end{align*}
with 
\begin{equation*}
E = \frac{k-\ga}{p}\big[\ga-2+(p-1)(k-2)\big]>0.
\end{equation*}
Furthermore, if $p>1$, the equality in (\ref{ineq-rellich}) holds if and only if $\psi=0$ in $M$.
\end{theorem}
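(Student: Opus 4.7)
The plan is to adapt the divergence-based strategy from the proof of Theorem \ref{teo-hardy} by introducing an auxiliary vector field built from $W$, namely
\begin{equation*}
V \;=\; \frac{\psi^{p-1} W}{r^{\ga-2}} \;=\; \frac{E^{p-1}}{p^{p-2}}\,\frac{\psi^{p-1}\na^M\psi}{r^{\ga-2}} \;+\; B\,\frac{\psi^p\,\bar\na r^T}{r^{\ga-1}}.
\end{equation*}
Its flux $\int_{\p M}\lan V,\nu\ran$ is exactly the claimed boundary integrand. Applying the product rule to the first summand of $V$ and (\ref{deriv_X}) of Lemma \ref{formula} to the second yields a pointwise lower bound for $\dv_M V$ whose bulk is a linear combination of $\psi^{p-1}\De\psi/r^{\ga-2}$, $\psi^{p-2}|\na\psi|^2/r^{\ga-2}$, $\psi^p(k-\ga+\Phi)/r^\ga$, and a cross term proportional to $\psi^{p-1}\lan\na\psi,\bar\na r\ran/r^{\ga-1}$, with equality iff $(\bar K_\rad)_\xi=0$ on $\spp\psi$.

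Next I integrate over $M$ and invoke the divergence theorem; if $\xi\in M$, one excises $\{r<r_0\}$ and sends $r_0\to 0$, with the extra boundary contribution being $O(r_0^{k-\ga})\to 0$ by the singularity analysis used in Theorem \ref{teo-hardy} and Section \ref{sec-eq-blw}. The non-negative $\psi^{p-2}|\na\psi|^2/r^{\ga-2}$ term is bounded below by applying the quadratic Hardy inequality of Theorem \ref{teo-hardy} (with $p=2$) to $\psi^{p/2}$, using $\psi^{p-2}|\na\psi|^2 = (4/p^2)|\na\psi^{p/2}|^2$. This contributes an extra $\tfrac{(p-1)(k-\ga)^2}{p}$ to the coefficient of $\int\psi^p/r^\ga$, together with an analogous contribution to $\int\psi^p\Phi/r^\ga$. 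The cross term is absorbed via Cauchy-Schwarz combined with Young's inequality, and the $\psi^{p-1}|\De\psi|$ term is handled by Young's inequality with the optimal parameter $\ep^{p/(p-1)}=E/p$, producing $\int|\De\psi|^p/r^{\ga-2p}$ with unit coefficient and an absorbable multiple of $\int\psi^p/r^\ga$. Adding the contributions $(\ga-2)(k-\ga)$ from Lemma \ref{formula} and $(p-1)(k-\ga)^2/p$ from the Hardy step gives $(\ga-2)(k-\ga) + (p-1)(k-\ga)^2/p = (k-\ga)[\ga-2+(p-1)(k-2)]/p = E$, so after the Young absorption the coefficient of $\int\psi^p/r^\ga$ becomes exactly $E^p/p^p = A$.

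For the equality case when $p>1$, equality forces (\ref{lapl_X}) to hold with equality (hence $(\bar K_\rad)_\xi=0$ in all of $M$), the auxiliary Hardy inequality to saturate (forcing $\psi^{p/2} = De^{-c|X|^2}$ on its support), and all Young steps to saturate. As in the proof of Theorem \ref{teo-hardy}, combining these rigidity constraints with the divergence-free identity arising from $\dv_M V$ forces $|X|^2\equiv 1$ on $\{\psi>0\}$, contradicting $X(\xi)=0$; hence $\psi\equiv 0$ in $M$. In the special case $(\bar K_\rad)_\xi=0$, the Hessian comparison is an equality and the divergence theorem applies without the assumption $\ga>2$, so only $E>0$ is needed, which allows the relaxed range $2-(p-1)(k-2)<\ga<k$.

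The main obstacle is the precise matching of the coefficient $B = (E/p)^{p-1}[E/(k-\ga)+(p-1)/p]$, which emerges from a harmonized choice of Young parameters on the cross and Laplacian terms combined with the $\Phi$-contributions of both Lemma \ref{formula} and the auxiliary Hardy step. Tracking these contributions carefully---verifying that no unabsorbable bulk term remains and that the coefficient in front of $\int\psi^p\Phi^\pm/r^\ga$ collapses to the stated $B$---is the technical heart of the argument, and in particular demands a specific tuning of the Cauchy-Schwarz/Young step applied to the cross term $\psi^{p-1}\lan\na\psi,\bar\na r\ran/r^{\ga-1}$.
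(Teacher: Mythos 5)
Your overall architecture is sound and is essentially the paper's own proof in disguise: the authors also split $\De\psi^p$ into a $\psi^{p-1}\De\psi$ part (handled by Young with the parameter $\si^q=E$) and a $p(p-1)\psi^{p-2}|\na^M\psi|^2$ part that is converted into $\frac{(p-1)(k-\ga)^2}{p}\int\frac{\psi^p}{r^\ga}+\frac{2(p-1)(k-\ga)}{p}\int\frac{\psi^p\Phi}{r^\ga}$ plus a divergence; their pointwise step (\ref{deriv_X})--(\ref{young-ep2}) with $\ep^2=k-\ga$ is exactly the un-integrated form of applying Theorem \ref{teo-hardy} with $p=2$ to $\psi^{p/2}$, and your identity $(\ga-2)(k-\ga)+\frac{(p-1)(k-\ga)^2}{p}=E$ is the correct heart of the matter. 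However, as written your accounting double-counts the gradient term and the boundary term. If you take $V=\frac{\psi^{p-1}W}{r^{\ga-2}}$ with the \emph{full} coefficient $B$ on the $\bar\na r^T$ part, then (a) the flux of $V$ already contains the entire claimed boundary integrand, so the extra boundary term $\frac{k-\ga}{2}\int_{\p M}\frac{\psi^p}{r^{\ga-1}}\lan\bar\na r,\nu\ran$ produced by invoking Theorem \ref{teo-hardy} has nowhere to go; (b) the $\Phi$-contribution of $\dv_M V$ is already $B$, so the "analogous contribution to $\int\psi^p\Phi/r^\ga$" from the Hardy step would push the coefficient past $B$ on both $\Phi^+$ and $\Phi^-$, and a larger coefficient on $\Phi^-$ weakens rather than strengthens the claim; and (c) the leftover cross term has coefficient $\frac{2(p-1)(k-\ga)E^{p-1}}{p^{p-1}}$, and any Cauchy--Schwarz/Young absorption of it produces a $\psi^{p-2}|\na^M\psi|^2/r^{\ga-2}$ term that can only be cancelled against the gradient term you have just spent on the Hardy inequality. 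Optimizing the split shows the gradient term is \emph{entirely} consumed by the cross term (completing the square gives the net contribution $-\frac{(p-1)(k-\ga)^2}{p}\cdot\frac{E^{p-1}}{p^{p-1}}\int\frac{\psi^p}{r^\ga}$, with nothing left for Hardy), so the "extra $\frac{(p-1)(k-\ga)^2}{p}$" is in fact a deficit of the same size; it still lands on $E$ only because $B(k-\ga)=\frac{E^{p-1}}{p^{p-1}}\big[E+\frac{(p-1)(k-\ga)^2}{p}\big]$, which is not the computation you describe.

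There are two consistent ways to repair this, and you should commit to one. Either keep the full $V$ and drop the appeal to Theorem \ref{teo-hardy} altogether, handling the gradient and cross terms jointly by the square
\begin{equation*}
\Big|\frac{\psi^{(p-2)/2}\na^M\psi}{r^{(\ga-2)/2}}+\frac{k-\ga}{p}\,\frac{\psi^{p/2}\,r\bar\na r}{r^{\ga/2+1}}\Big|^2\ \ge\ 0
\end{equation*}
(this is the paper's route, and it is where the rigidity $\na^M\psi=-\frac{k-\ga}{p}\psi\, r\bar\na r$ for the equality case comes from); or take $V=\frac{E^{p-1}}{p^{p-1}}\big[\frac{\na^M\psi^p}{r^{\ga-2}}+(\ga-2)\frac{\psi^p\bar\na r^T}{r^{\ga-1}}\big]$ only, in which case the cross terms cancel identically (this is Green's identity for $\psi^p$ against $r^{2-\ga}$, so there is no cross term to absorb), the full $p(p-1)$-weighted gradient term is available for the integrated Hardy inequality, and the Hardy boundary term supplies precisely the missing $\frac{2(p-1)(k-\ga)}{p}\cdot\frac{E^{p-1}}{p^{p-1}}$ portion of $B\frac{\psi\bar\na r^T}{r}$ in $W$. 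Two further small points: for $1<p<2$ the function $\psi^{p/2}$ need not be $C^1$ where $\psi$ vanishes, so the application of Theorem \ref{teo-hardy} to it requires an approximation argument (the paper's pointwise version on $\{\psi>0\}$ sidesteps this); and in the equality case the contradiction is not "$X(\xi)=0$" (one need not have $\xi\in M$) but rather that $|X|\equiv 1$ together with $X=X^T=\frac12\na^M(|X|^2)=0$ is absurd.
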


A special case occurs when $\ga=2p$. In this case, it holds
\begin{corollary}\label{cor_rellich} Let $M$ be a compact manifold with (possibly nonempty) boundary $\p M$. Assume $M$ is isometrically immersed in a Hadamard manifold $\bar M$ and let $r = d_{\bar M}(\cdot\,,\xi)$ be the distance in $\bar M$ from a fixed point $\xi$. Assume $k>2p$, for some $p>1$. Then, for all $0\le \psi \in C^1(M)$, it holds
\begin{equation*}
A\int_M \frac{\psi^p}{r^{2p}} + B\int_M \frac{\psi^p}{r^{2p}}\Phi^+ \le \int_M |\De\psi|^p + B\int_M \frac{\psi^p}{r^{2p}}\Phi^- + \int_{\p M}\frac{\psi^{p-1}}{r^{2(p-1)}}\lan W,\nu\ran.
\end{equation*}
%provided $\int_{\p M}\frac{\psi^p}{r^{2(p-1)}}\lan W,\nu\ran$ exists. 
Here, $\Phi = 2p|\bar\na r^\perp|^2+ r\lan \bar\na r,H \ran$, and
\begin{align*}
A &= \frac{k^p}{p^{2p}}(k-2p)^p(p-1)^p;\\
B &= \frac{k^{p-1}}{p^{2p-1}}(k-2p)^{p-1}(p-1)^{p}(k+1); \ \  \mbox{and} \\
W &= \frac{(k-2p)^{p-1}k^{p-1}(p-1)^{p-1}}{p^{2p-3}}\big[\na^M\psi + \frac{(p-1)(k+1)}{p^2}\frac{\psi\bar\na r^\top}{r}\big]. 
\end{align*}
Moreover, the equality holds if and only if $\psi=0$ everywhere in $M$.
\end{corollary}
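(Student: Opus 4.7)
The plan is that Corollary \ref{cor_rellich} is an immediate specialization of Theorem \ref{rellich} at $\ga = 2p$, so the proof is essentially bookkeeping. First I would check the hypotheses of Theorem \ref{rellich}: the assumption $p > 1$ and $k > 2p$ gives $2 < 2p < k$, so the range condition $2 < \ga < k$ is satisfied with $\ga = 2p$. Note we do not need the curvature-free relaxed range here.

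Next I would carry out the substitution $\ga = 2p$ into the constants. The key simplification is
\[
\ga - 2 + (p-1)(k-2) = 2(p-1) + (p-1)(k-2) = (p-1)k,
\]
which yields
\[
E = \frac{(k-2p)(p-1)k}{p} > 0.
\]
From $A = E^p/p^p$ one gets immediately $A = \frac{k^p(k-2p)^p(p-1)^p}{p^{2p}}$. For $B$, observe that $\frac{E}{k-\ga} = \frac{(p-1)k}{p}$, so the bracket in the definition of $B$ reduces to $\frac{(p-1)(k+1)}{p}$, and multiplying by $E^{p-1}/p^{p-1}$ gives the stated value. For the boundary vector field $W$, the coefficient of $\na^M \psi$ is $E^{p-1}/p^{p-2}$; factoring this common scalar out of the second term, which is $B \cdot \frac{\psi \bar\na r^\top}{r}$, produces precisely the factor $\frac{(p-1)(k+1)}{p^2}$ in front of $\frac{\psi \bar\na r^\top}{r}$.

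Finally I would rewrite the integrands. The main interior term becomes $\frac{|\De \psi|^p}{r^{\ga-2p}} = |\De\psi|^p$, the boundary weight becomes $\frac{\psi^{p-1}}{r^{\ga-2}} = \frac{\psi^{p-1}}{r^{2(p-1)}}$, and $\Phi = \ga|\bar\na r^\perp|^2 + r\lan \bar\na r, H\ran$ becomes $2p|\bar\na r^\perp|^2 + r\lan \bar\na r, H\ran$. Substituting these expressions into inequality (\ref{ineq-rellich}) yields exactly the inequality in the statement of Corollary \ref{cor_rellich}. The equality characterization ($\psi \equiv 0$ on $M$) is inherited verbatim from the final clause of Theorem \ref{rellich}, since we assume $p>1$.

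There is no real obstacle: the entire argument is a direct invocation of Theorem \ref{rellich} followed by routine algebraic simplification of the four constants $E, A, B, W$ at $\ga = 2p$. The only care needed is to track the factors of $p$ carefully when pulling $E^{p-1}/p^{p-2}$ out of $B\,\psi\bar\na r^\top/r$ to match the normalization used in the statement of the corollary.
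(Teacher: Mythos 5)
Your proposal is correct and coincides with the paper's (implicit) argument: the corollary is stated there precisely as the specialization $\ga=2p$ of Theorem \ref{rellich}, and your algebraic simplifications of $E$, $A$, $B$, and $W$ all check out, including the identity $\ga-2+(p-1)(k-2)=(p-1)k$ and the factor $\frac{(p-1)(k+1)}{p^2}$ obtained from $B\,p^{p-2}/E^{p-1}$. Nothing further is needed.
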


Now, we will prove Theorem \ref{rellich}.
\begin{proof}[Proof of Theorem \ref{rellich}] As in the proof of Theorem \ref{teo-hardy-carron}, for convenience, we will prove this theorem by considering the radial vector field $X=r\bar\na r$. By Lemma \ref{formula}, 
\begin{align*}
\frac{\De\psi^p}{(2-\ga)|X|^{\ga-2}} &= \frac{1}{2-\ga}\Big[\psi^p \De(\frac{1}{|X|^{\ga-2}}) + \dv_M\big[\frac{\na^M\psi^p}{|X|^{\ga-2}} - \psi^p \na^M(\frac{1}{|X|^{\ga-2}})\big]\Big]
\\& \ge \frac{1}{2-\ga}\dv_M Y
+\,\psi^p\,\big[\frac{k-\ga}{|X|^\ga}+\ga\frac{|X^\perp|^2}{|X|^{\ga+2}}+ \lan \frac{X}{|X|^\ga}, H\ran\big],
\end{align*}
where $Y = \frac{\na^M\psi^p}{|X|^{\ga-2}} + \frac{(\ga-2)\psi^p X^T}{|X|^\ga}$.
Using that $\ga>2$ and $\De\psi^p = p\psi^{p-2}[\psi\De\psi + (p-1)|\na^M \psi|^2]$, we obtain
\begin{align} 
\frac{p\psi^{p-1}\De\psi}{|X|^{\ga-2}} + \frac{p(p-1)\psi^{p-2}|\na^M \psi|^2}{|X|^{\ga-2}} &\le \ \dv_M Y 
\nonumber\\&\hspace{-2cm}\ -(\ga-2)\,\frac{\psi^p}{|X|^{\ga}}\,\big[k-\ga +\frac{\ga|X^\perp|^2}{|X|^{2}}+\lan X,H\ran\big].\label{eq-main}
\end{align}
Furthermore, the equality in (\ref{eq-main}) also remains valid when $\ga=2$ or when the radial curvature $(K_{\rad})_{\xi}=0$. Henceforth, we will assume $\ga\ge 2$ or $(K_{\rad})_\xi = 0$. So, (\ref{eq-main}) holds.

%if $\ga>2$.  Note that (\ref{eq-main})  trivially holds if $\ga=2$.  And, if $\ga<2$, we obtain \begin{align}  \frac{p\psi^{p-1}\De\psi}{|X|^{\ga-2}} + \frac{p(p-1)\psi^{p-2}|\na^M \psi|^2}{|X|^{\ga-2}} &\ge \ \dv_M Y  \nonumber\\&\hspace{-1cm}\ -(\ga-2)\,\frac{\psi^p}{|X|^{\ga}}\,\big[k-\ga +\frac{\ga|X^\perp|^2}{|X|^{2}}+\lan X,H\ran\big].\label{eq-main-2} \end{align} Furthermore, we observe that both inequalities (\ref{eq-main}) and (\ref{eq-main-2}) reach equalities when the radial curvature $(K_{\rad})_{\xi}=0$ in $\spp(\psi)$.

We write $\ga = \al+\be+1$, hence $\ga-2=(\al-1) +\be$. Using Young Inequality with $\si>0$, we have 
\begin{equation*}
p\De\psi\frac{\psi^{p-1}}{|X|^{\ga-2}} \ge -\frac{1}{p\si^p}\frac{|p\De\psi|^p}{|X|^{(\al-1)p}} - \frac{\si^q}{q}\frac{\psi^{(p-1)q}}{|X|^{\be q}},    
\end{equation*}
where $q=\frac{p}{p-1}$. As in the proof of Theorem \ref{teo-hardy-carron}, we take $\be q = \ga$. We obtain $\be = (p-1)(\al+1)$ and $p(\al-1) = \ga-2p$. Thus, 
\begin{equation}\label{cauchy1}
\frac{p\psi^{p-1}\De\psi}{|X|^{\ga-2}} \ge -\frac{1}{p\si^p}\frac{|p\De\psi|^p}{|X|^{\ga-2p}} - \frac{\si^q}{q}\frac{\psi^p}{|X|^\ga}.
\end{equation}
Applying (\ref{cauchy1}) into (\ref{eq-main}), one has
\begin{align} 
\big[(\ga-2)(k-\ga) &- \frac{\si^q}{q}\big]\frac{\psi^p}{|X|^\ga}  + p(p-1)\psi^{p-2}\frac{|\na^M \psi|^2}{|X|^{\ga-2}} \nonumber\\ &\le \dv Y + \frac{p^{p-1}}{\si^p}\frac{|\De\psi|^p}{|X|^{\ga-2p}} - (\ga-2) \frac{\psi^p}{|X|^\ga}\big[\frac{\ga|X^\perp|^2}{|X|^2} + \lan X,H\ran\big].\label{eq-main2}
\end{align}

On the other hand, we write $p=a+b+1$. By Lemma \ref{formula},
\begin{align}
\dv_M(\frac{\psi^p X^\top}{|X|^\ga}) & \ge \psi^p[\frac{k-\ga}{|X|^\ga}+\frac{\ga |X^\perp|^2}{|X|^{\ga+2}}]  + \lan p \na^M \psi + \psi H, \frac{\psi^{p-1} X}{|X|^\ga}\ran\nonumber
\\
&= \psi^p[\frac{k-\ga}{|X|^\ga} + \frac{\ga |X^\perp|^2}{|X|^{\ga+2}}]  + \lan \frac{p\psi^a \na^M \psi}{|X|^\al}, \frac{\psi^b X}{|X|^{\be+1}} \ran + \frac{\psi^p}{|X|^\ga}\lan H,X\ran. \label{cauchy-young2}
\end{align}
By Cauchy-Schwarz Inequality and Young Inequality with $\ep>0$,
\begin{equation}\label{young-ep2}
\lan \frac{p\psi^a \na^M \psi}{|X|^\al}, \frac{\psi^b X}{|X|^{\be+1}} \ran \ge -\frac{p^2}{2\ep^2}\frac{\psi^{2a}|\na^M\psi|^2}{|X|^{2\al}} - \frac{\ep^2}{2}\frac{\psi^{2b}}{|X|^{2\be}}.
\end{equation}
We take $2a=p-2$, $2b=p$, $2\be = \ga$ and $2\al=\ga-2$. By (\ref{cauchy-young2}) and (\ref{young-ep2}),
\begin{align*}
\dv_M(\frac{\psi^p X^\top}{|X|^\ga}) \ge \frac{\psi^p}{|X|^\ga}[k-\ga + \frac{\ga |X^\perp|^2}{|X|^2}] &+ \frac{\psi^p}{|X|^\ga}\lan \vec{H},X\ran\\& -\frac{p^2}{2\ep^2}\frac{\psi^{p-2}|\na^M\psi|^2}{|X|^{\ga-2}} - \frac{\ep^2}{2}\frac{\psi^p}{|X|^\ga}.
\end{align*}
Thus,
\begin{align*}
\frac{p^2}{2}\frac{\psi^{p-2}|\na^M\psi|^2}{|X|^{\ga-2}} &\ge  \frac{h(\ep) \psi^p}{|X|^\ga} + \frac{\ep^2\psi^p}{|X|^\ga}\Phi  -\dv_M(\frac{\ep^2\psi^p X^\top}{|X|^\ga}),\label{sigma}
\end{align*}
where $\Phi = \frac{\ga|X^\perp|^2}{|X|^2}+ \lan X,H \ran$ and $h(\ep)=\ep^2(k-\ga -\frac{\ep^2}{2})$. 
Plotting this inequality into (\ref{eq-main2}), we obtain
\begin{equation}\label{rel_bef}
A\frac{\psi^p}{|X|^\ga} + B\frac{\psi^p}{|X|^\ga}\Phi \le \dv_M Z + \frac{|\De\psi|^p}{|X|^{\ga-2p}},
\end{equation}
where 
\begin{align*}
A = A(\si,\ep) &= \frac{\si^p}{p^{p-1}}\big[(\ga-2)(k-\ga) - \frac{\si^q}{q} + \frac{2(p-1)h(\ep)}{p}\big];\\
B = B(\si,\ep) &=\frac{\si^p}{p^{p-1}} \big[\ga-2 + \frac{2(p-1)\ep^2}{p}\big];\\
Z = Z(\si,\ep) &= \frac{\si^p}{p^{p-1}} \big[Y + \frac{2(p-1)\ep^2}{p} \frac{\psi^p X^T}{|X|^\ga}\big] \\& = \frac{\si^p}{p^{p-1}}\big[\frac{\na^M\psi^p}{|X|^{\ga-2}} +  \big(\ga-2 +\frac{2(p-1)\ep^2}{p}\big)\frac{\psi^p X^T}{|X|^\ga}\big].
\end{align*}

In order to maximize $A(\si,\ep)$, note that $\frac{\p A}{\p\ep}=0$ if and only if $h'(\ep)=0$, which gives $\ep^2=k-\ga$, with $h(\ep)=\frac{(k-\ga)^2}{2}$. Hence, 
\begin{equation*}
A = \frac{\si^p}{p^{p-1}}[(\ga-2)(k-\ga) + \frac{p-1}{p}(k-\ga)^2 - \frac{\si^q}{q}] = \frac{\si^p}{p^{p-1}}[E - \frac{\si^q}{q}],
%&= \si^p[\frac{k-\ga}{p}((p-1)k+\ga-2p) - \frac{\si^q}{q}].
\end{equation*}
where 
\begin{equation}
E=(k-\ga)[\ga-2 + \frac{p-1}{p}(k-\ga)] = \frac{(k-\ga)}{p}[\ga-2 + (p-1)(k-2)].  
\end{equation}
Thus, $\frac{\p A}{\p \si} = 0$ if and only if $\si^q = E$. Note that if $k-\ga>0$ then $E>0$ if and only if $\ga>2-(k-2)(p-1)$.
So, by  hipothesis, $E>0$. We obtain $A = \frac{\si^p}{p^{p-1}}\frac{ E}{p}=\frac{\si^{p+q}}{p^p} = \frac{\si^{pq}}{p^p} = \frac{E^p}{p^p}$. It is easy to see that $\displaystyle\max_{\si,\ep} A(\si,\ep) = E^p/p$. Hence
\begin{equation*}
A = \frac{E^p}{p^p} = \frac{(k-\ga)^p}{p^{2p}}\big[\ga-2 + (p-1)(k-2)\big]^p.
\end{equation*}
Furthermore, 
\begin{align*}
B &= \frac{E^{p-1}}{p^{p-1}}\big[\ga-2 + \frac{2(p-1)(k-\ga)}{p}\big] = \frac{E^{p-1}}{p^{p-1}}\big[\frac{E}{k-\ga} + \frac{p-1}{p}\big]>0;\\
Z &= \frac{E^{p-1}}{p^{p-1}}\big[\frac{p\psi^{p-1}\na^M\psi}{|X|^{\ga-2}} +  \big[\ga-2 +\frac{2(p-1)(k-\ga)}{p}\big]\frac{\psi^p X^T}{|X|^\ga}\big]\\
&= \frac{\psi^{p-1}}{|X|^{\ga-2}}\big[\frac{ E^{p-1}}{p^{p-2}}\na^M\psi + B\frac{\psi X^T}{|X|^2}\big] = \frac{\psi^{p-1}}{|X|^{\ga-2}}W,
\end{align*}
where $W =\frac{E^{p-1}}{p^{p-2}}\na^M\psi + B\frac{\psi X^T}{|X|^2}$.

As in the proof of Theorem \ref{teo-hardy-carron}, for a small regular value $r_0$ of the geodesic distance $r=|X|$ satisfies 
\begin{align*}
\int_{M\cap [|X|>r_0]}\dv_MZ &=\int_{[\p M \cap [|X|>r_0]]\cup [M\cap [|X|=r_0]]}\lan Z,\nu\ran\nonumber\\
&=\int_{[\p M \cap [|X|>r_0]]}\lan Z,\nu\ran + \big[O(r_0^{2-\ga})+O(r_0^{1-\ga})\big]O(r_0^{k-1})\\
&=\int_{[\p M \cap [|X|>r_0]]}\lan Z,\nu\ran + O(r_0^{k-\ga})
\end{align*}
So, one has $\int_{M}\dv_MZ = \lim_{r_0\to 0}\int_{M\cap [|X|>r_0]}\dv_MZ = \int_{\p M} \lan Z,\nu\ran$.

Therefore, using (\ref{rel_bef}), 
\begin{equation*}
A\int_M \frac{\psi^p}{|X|^\ga} + B\int_M \frac{\psi^p}{|X|^\ga}\Phi^+ \\
\le \int_M \frac{|\De\psi|^p}{|X|^{\ga-2p}} + B\int_M \frac{\psi^p}{|X|^\ga}\Phi^- + \int_{\p M} \lan Z,\nu\ran.
\end{equation*}

Furthermore, if the equality holds, then $(K_{\rad})_{\xi}=0$ in $\spp\psi$ and the equality holds in (\ref{young-ep2}) with $\ep^2 = k-\ga$. Thus, it holds that $\na^M\psi = -\frac{k-\ga}{p}\psi X$. Following the same arguments in the proof of Theorem \ref{teo-hardy-carron} (after equation (\ref{radial})) we will obtain $\psi=0$. Theorem \ref{rellich} is proved.
\end{proof}

\section{Equality cases and blowup analysis}\label{sec-eq-blw}

Let us do a blowup analysis near the point $\xi$ in order to show that the integrals in (\ref{des-hardy}) converge. Note que  if $\xi\not\in M$ or $\ga\le 0$, then $\frac{1}{r^{\ga}}\in C^1(M)$ hence all the integrals in (\ref{des-hardy}) and (\ref{ineq-rellich}) converge. Assume $\ga>0$ and $\xi\in M$. Since $M$ is compact and isometrically immersed in $\bar M$, for a small  $r_0>0$, the intersection $M\cap [r\le r_0]$ contains only a finite number of connected components, all of them containing $\xi$ at their images. Furthermore, we may assume $|\bar\na r^T|\ge a>0$ at $M\cap [r\le r_0]$. In particular, all $0<s<r_0$ are regular values for $r$.  
By the coarea formula,
\begin{align}
\int_{M\cap [r\le r_0]} \frac{1}{r^\ga} &= \frac{1}{a}\int_{M\cap [r\le r_0]} \frac{1}{r^\ga}|\na r^T| = \frac{1}{a}\int_0^{r_0} \frac{1}{s^{\ga}}\int_{M\cap [r=s]} d\si^2 ds \nonumber\\& = \int_0^{r_0} \frac{1}{s^\ga} O(s^{k-1}) ds= O(1)\int_0^{r_0} s^{k-1-\ga}ds = O(r_0^{k-\ga}).\label{conv-int-M}
\end{align}
The last term follows from the fact that $k-\ga>0$. By similar arguments, since $\p M$ is also an isometrically immersed submanifold in $\bar M$, 
\begin{equation}\label{conv-int-bry}
\int_{\p M\cap [r\le r_0]} \frac{1}{r^{\ga-1}} = O(r_0^{k-\ga}).
\end{equation}

In this section, we will analyse functions that satisfy the equalities in Theorems \ref{teo-hardy-carron} and \ref{rellich}, for the case $p=1$. We recall that, for both theorems, a function that satisfies the equality for $p>1$ must be identically null. For $p=1$, the same conclusion fails, however, interesting conclusions are derived.

Let $B_R\subset \real^n$ be a ball of radius $R$ and center at some point $\xi \in \real^n$ and let $r$ be the distance function from $\xi$. We will prove the following

\begin{proposition}  Let $M^k$ be a compact submanifold with boundary $\p M$ immersed in $\real^n$. Assume $\p M$ is contained in the hypersphere $\p B_R = r^{-1}(R)$. For $0\le \psi\in C^1(M)$, the equality in Theorem \ref{teo-hardy-carron}, with $p=1$ and $-\infty<\ga<k$, occurs if and only if $\psi = \psi(r)$, i.e., it depends only on $r$, and $\psi'(r)\le 0$.
\end{proposition}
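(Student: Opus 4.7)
The plan is to extract pointwise equality conditions from the Hardy inequality in the Euclidean setting and then use them to force $\psi$ to be a nonincreasing function of $r$ alone. Since $(\bar K_\rad)_\xi = 0$ in $\real^n$, the Hessian comparison (\ref{hes-comp}) is an equality, so Lemma~\ref{formula} with $p=1$ yields the \emph{pointwise identity}
\[
\dv_M\Bigl(\frac{\psi X^\top}{r^\ga}\Bigr) = (k-\ga)\frac{\psi}{r^\ga} + \ga\frac{\psi|\bar\na r^\perp|^2}{r^\ga} + \frac{\psi\lan H,\bar\na r\ran}{r^{\ga-1}} + \Bigl\lan\na^M\psi, \frac{X}{r^\ga}\Bigr\ran.
\]
Integrating over $M$ (valid near $\xi$ thanks to the blowup analysis done earlier in this section) and subtracting from the right-hand side of Theorem~\ref{teo-hardy-carron} with $p=1$, the Hardy defect becomes
\[
\int_M \frac{|\na^M\psi|\,r + \lan\na^M\psi,X\ran}{r^\ga}\ +\ \int_M \frac{\psi\bigl(|\lan H,\bar\na r\ran| + \lan H,\bar\na r\ran\bigr)}{r^{\ga-1}}\ \ge\ 0,
\]
with both integrands nonnegative (Cauchy--Schwarz, and $|t|+t\ge 0$). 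Equality in Hardy is therefore equivalent to the pointwise conditions: \textbf{(E1)} at each $x\in M$, either $\na^M\psi(x)=0$, or $X^\perp(x)=0$ and $\na^M\psi(x) = -\la(x) X(x)$ for some $\la(x)\ge 0$; and \textbf{(E2)} at each $x\in M$, either $\psi(x)=0$ or $\lan H,\bar\na r\ran(x)\le 0$.

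For the forward direction, I would use (E1) on the open set $U = \{\na^M\psi\ne 0\}$: there $X^\perp=0$, so $\bar\na r$ is a unit tangent to $M$, and $\na^M\psi = -\la r\bar\na r$ with $\la\ge 0$. Hence $\na^M\psi$ is orthogonal to every regular level set of $r|_M$, so $\psi$ is constant on path components of $\{r=s\}\cap M$ meeting $U$, while on $M\setminus U$ it is locally constant. Because on $U$ the integral curves of $\bar\na r^\top$ are genuine radial rays in $\real^n$ moving at unit speed in the $r$-direction, and every such curve exits $M$ through $\p M\subset \p B_R$, compactness of $M$ together with the boundary hypothesis patches the local data into a single function $g:[r_{\min},R]\to[0,\infty)$ with $\psi = g(r)$. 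Along each radial segment the identity $\na^M\psi = -\la r\bar\na r$ reads $g'(r) = -\la r$, so $g'\le 0$.

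For the reverse direction, assuming $\psi = g(r)$ with $g'\le 0$, one substitutes $\na^M\psi = g'(r)\bar\na r^\top$ into the identity above and uses the sign $g'\le 0$ together with the boundary condition $r\equiv R$ on $\p M$ to saturate both the Cauchy--Schwarz inequality on $\lan\na^M\psi,X\ran$ and the absolute value on $\lan H,\bar\na r\ran$ used in the proof of Theorem~\ref{teo-hardy-carron}, giving equality. The main obstacle is the gluing step in the forward direction: turning local constancy of $\psi$ on level sets, plus radial dependence on $U$, into a single global function of $r$. This is precisely where the boundary assumption $\p M\subset \p B_R$ is essential, via the flow of $\bar\na r^\top$ to the common sphere.
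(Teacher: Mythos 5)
Your core mechanism is the one the paper uses: since $(\bar K_{\rad})_\xi=0$ in $\real^n$, Lemma \ref{formula} becomes an exact identity, whose integrated form is precisely (\ref{eq-rel-radius}); subtracting it from the Hardy inequality exhibits the defect as the integral of a pointwise nonnegative quantity, and equality forces $\na^M\psi=-|\na^M\psi|\,\bar\na r$. Where you differ is in \emph{which} inequality you take equality in. You work with the literal statement of Corollary \ref{teo-hardy-carron}, whose right-hand side carries $|\lan H,\bar\na r\ran|$, and you therefore pick up the extra condition (E2). The paper instead analyses the rearranged form (\ref{hardy-eq-p=1}), i.e.\ Theorem \ref{teo-hardy} with $p=1$, in which the mean-curvature term enters through $\Phi$ without an absolute value and cancels identically against the exact identity, so no condition on $H$ survives. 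Your bookkeeping is the more faithful reading of the statement, but it exposes a real problem in your converse: the claim that taking $\psi=g(r)$ with $g'\le 0$ "saturates the absolute value on $\lan H,\bar\na r\ran$" is false. The required condition $\lan H,\bar\na r\ran\le 0$ on $\{\psi>0\}$ is a constraint on the immersion, and no choice of $\psi$ can produce it; with your reading the "if" direction simply fails in general. A related issue, independent of the $H$-term, is that saturating Cauchy--Schwarz in the reverse direction requires $|\bar\na r^\top|=1$ on $\{g'\ne 0\}$, again a property of $M$ rather than of $\psi$. (The paper avoids both problems only by proving, in effect, just the "only if" half.)

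On the forward direction, your attempt to justify the gluing by flowing along $\bar\na r^\top$ to the common sphere $\p B_R$ is a genuine improvement over the paper's bare "hence $\psi=\psi(r)$", but as you yourself note it is not complete: on $\{\na^M\psi=0\}$ you only obtain local constancy, and promoting "constant on path components of $M\cap\{r=s\}$" to a single global function of $r$ requires connectivity of these level sets, which neither your argument nor the paper's establishes. In summary: same skeleton as the paper (exact divergence identity plus defect decomposition), a more literal but ultimately problematic treatment of the $|\lan H,\bar\na r\ran|$ term that breaks your reverse implication, and an honest but unfinished gluing step.
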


\begin{proof}
For all $0\le \psi\in C^1(M)$, it holds
\begin{equation}\label{hardy-eq-p=1}
(k-\ga)\int_M \frac{\psi}{r^\ga} + \int_M \frac{\psi}{r^\ga}\Phi
\le \int_M \frac{|\na^M \psi|}{r^{\ga-1}} + \int_{\p M}\frac{\psi}{r^{\ga-1}}\lan \bar\na r,\nu\ran,
\end{equation}
where $\Phi = \ga|\bar \na r^\perp|^2 + r\lan \bar\na r,H\ran$.

As in (\ref{eq-rel-radius}),
\begin{equation*}
\int_{\p M} \lan \psi\frac{\bar\na r^T}{r^{\ga-1}},\nu\ran =  \int_M \frac{\psi^p}{r^\ga}[k-\ga + \Phi]  + \lan \na^M \psi , \frac{\bar\na r}{r^{\ga-1}}\ran. 
\end{equation*}
We obtain
\begin{equation*}
\int_M \frac{|\na^M \psi|}{r^{\ga-1}} + \lan \na^M \psi , \frac{\bar\na r}{r^{\ga-1}}\ran \ge 0.
\end{equation*}
Thus, the equality in (\ref{hardy-eq-p=1}) holds if and only if $\na^M \psi = -|\na^M\psi|\bar\na r$, hence $\psi = \psi(r)$, i.e., depends only on $r$, and $\psi'(r)\le 0$.
\end{proof}

Now, we will analyse Theorem \ref{rellich} for the case $p=1$.

\begin{proposition} Let $M^k$ be a compact submanifold with boundary $\p M$ immersed in $\real^n$. Assume $\p M$ is contained in the hypersphere $\p B_R = r^{-1}(R)$. For $0\le \psi\in C^1(M)$, the equality in Theorem \ref{rellich}, with $p=1$ and $2<\ga<k$, occurs  if and only if $\De\psi\le 0$.
\end{proposition}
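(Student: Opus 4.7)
The plan is to exploit the fact that when $p=1$ and $\bar M=\real^n$, every inequality used in the proof of Theorem \ref{rellich} becomes an equality except for the elementary pointwise bound $-\De\psi\le|\De\psi|$, which holds with equality if and only if $\De\psi\le 0$.

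First I would specialize the constants: with $p=1$ one computes $E=(k-\ga)(\ga-2)$, so $A=E$, $B=\ga-2$, and $W=\na^M\psi+(\ga-2)\psi\bar\na r^T/r$. Since $\real^n$ has radial curvature identically zero, the Hessian comparison (\ref{hes-comp}), and therefore both inequalities in Lemma \ref{formula}, are pointwise equalities on $M\setminus\{\xi\}$. Consequently, the inequality (\ref{eq-main}) in the proof of Theorem \ref{rellich} reduces for $p=1$ to the pointwise \emph{identity}
\[
\frac{\De\psi}{|X|^{\ga-2}}=\dv_M Y-(\ga-2)\frac{\psi}{|X|^\ga}\bigl[k-\ga+\Phi\bigr],
\]
where $Y=\na^M\psi/|X|^{\ga-2}+(\ga-2)\psi X^T/|X|^\ga$, which one checks equals $W/|X|^{\ga-2}$. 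The Young-inequality step applied to the gradient term disappears because its coefficient $p(p-1)$ vanishes, so no further loss is introduced.

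Next I would integrate over $M$. The blowup analysis already performed in the proof of Theorem \ref{rellich} (with $Y$ in place of $Z$; only the prefactors change, not the order of vanishing near $\xi$) gives $\int_M\dv_M Y=\int_{\p M}\langle Y,\nu\rangle$. Because $\p M\subset \p B_R$, on $\p M$ the distance $r$ is constant equal to $R$, so $\int_{\p M}\langle Y,\nu\rangle=\int_{\p M}\langle W,\nu\rangle/r^{\ga-2}$. Rearranging and using $(\ga-2)(k-\ga)=E$ yields the identity
\[
E\int_M \frac{\psi}{r^\ga}+(\ga-2)\int_M \frac{\psi\,\Phi}{r^\ga}=-\int_M \frac{\De\psi}{r^{\ga-2}}+\int_{\p M}\frac{\langle W,\nu\rangle}{r^{\ga-2}}.
\]
On the other hand, regrouping $\Phi^+-\Phi^-=\Phi$ on the left, Theorem \ref{rellich} for $p=1$ says
\[
E\int_M \frac{\psi}{r^\ga}+(\ga-2)\int_M \frac{\psi\,\Phi}{r^\ga}\le \int_M \frac{|\De\psi|}{r^{\ga-2}}+\int_{\p M}\frac{\langle W,\nu\rangle}{r^{\ga-2}}.
\]
Subtracting the identity from this inequality collapses the entire statement to $-\int_M \De\psi/r^{\ga-2}\le \int_M |\De\psi|/r^{\ga-2}$, and equality holds exactly when $-\De\psi=|\De\psi|$ pointwise, i.e.\ $\De\psi\le 0$; conversely, any such $\psi$ trivially realizes the equality.

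The main obstacle is really bookkeeping rather than analysis: one has to verify carefully that $Y$ and $W/|X|^{\ga-2}$ coincide, that the blowup argument from Section \ref{sec-eq-blw} carries over unchanged with the new prefactors, and, crucially, that the hypothesis $\p M\subset \p B_R$ is precisely what allows $r^{\ga-2}$ to be pulled outside the boundary integral so that the two boundary contributions cancel when the identity is substituted into the inequality. Once these points are checked, the characterization $\De\psi\le 0$ is forced by the single remaining sign comparison.
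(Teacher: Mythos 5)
Your proposal is correct and lands on exactly the same final reduction as the paper: the only inequality that survives the $p=1$, $\real^n$ specialization is $-\De\psi\le|\De\psi|$, so equality in (\ref{ineq-rellich}) forces $\int_M\frac{2(\De\psi)^+}{r^{\ga-2}}=0$, i.e.\ $\De\psi\le 0$, and conversely. The mechanics differ slightly. The paper works ``from the outside'': it takes the integrated inequality (\ref{ineq-prop-p=1}) at face value and unpacks its boundary term through two further applications of the divergence theorem ((\ref{eq-rel-De}), (\ref{eq-rel-radius}), (\ref{eq-rel-fin})), using $r\equiv R$ on $\p M$ to pull constants out, until the interior terms cancel. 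You work ``from the inside'': you integrate the pointwise identity underlying (\ref{eq-main}) (an identity here because $(\bar K_{\rad})_\xi=0$ in $\real^n$ makes Lemma \ref{formula} an equality, and the gradient term carries the vanishing factor $p(p-1)$), and subtract it from the statement of Theorem \ref{rellich}. Your route is a bit cleaner and exposes one point more sharply than the paper does: since $Y=W/r^{\ga-2}$ pointwise, the two boundary integrals coincide identically, so -- contrary to your closing remark that the hypothesis $\p M\subset\p B_R$ is ``crucially'' what makes the boundary contributions cancel -- that hypothesis is not actually needed for the cancellation in your version (nor, on closer inspection, in the paper's, where the two $R$-dependent terms cancel each other). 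This is a harmless over-caution, not a gap; the argument is sound as written.
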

\begin{proof} Let $0\le \psi\in C^1(M)$. By Theorem \ref{rellich} with $p=1$ and $2<\ga< k$, 
 \begin{align}
(\ga-2)(k-\ga)\int_{M} \frac{\psi}{r^\ga} &+ (\ga-2)\int_{M} \frac{\psi}{r^\ga}\Phi \nonumber\\&\le \int_{M} \frac{|\De\psi|}{r^{\ga-2}} + \int_{\p M} \frac{1}{r^{\ga-2}}\lan \na^M\psi+(\ga-2)\psi\frac{\bar\na r^T}{r},\nu\ran.\label{ineq-prop-p=1}
\end{align}
Since $r=R$ in $\p M$, 
\begin{equation}\label{eq-rel-De}
\int_{\p M} \frac{1}{r^{\ga-2}}\lan \na^M\psi,\nu \ran = \frac{1}{R^{\ga-2}}\int_{\p M} \lan \na^M\psi,\nu \ran = \frac{1}{R^{\ga-2}}\int_{M} \De\psi,
\end{equation}
and, by Lemma \ref{formula} (using that $X=r\bar\na r$), 
\begin{align}\label{eq-rel-radius}
\int_{\p M} \lan \psi\frac{\bar\na r^T}{r^{\ga-1}},\nu\ran &= \int_{\p M} \lan \psi\frac{X^T}{|X|^\ga},\nu\ran = \int_M \dv_M(\psi\frac{X^T}{|X|^\ga}) \\ &= 
\int_M \frac{\psi}{|X|^\ga}[k-\ga+\frac{\ga |X^\perp|^2}{|X|^2}]  + \lan \na^M \psi + \psi H, \frac{X}{|X|^\ga}\ran \nonumber\\&=
\int_M \frac{\psi^p}{r^\ga}[k-\ga + \Phi]  + \lan \na^M \psi , \frac{\bar\na r}{r^{\ga-1}}\ran \nonumber.
\end{align}
Furthermore,
\begin{align}\label{eq-rel-fin}
\int_M \lan \na^M \psi , \frac{\bar\na r}{r^{\ga-1}}\ran &= -\frac{1}{\ga-2}\int_M \lan \na^M \psi , \bar\na r^{2-\ga}\ran  
\\&= -\frac{1}{\ga-2}\int_M [\dv(r^{2-\ga}\na^M\psi) - r^{2-\ga}\De\psi] 
\nonumber\\&=
-\frac{R^{2-\ga}}{\ga-2}\int_{\p M}\lan \na^M\psi,\nu\ran + \frac{1}{\ga-2}\int_{M} r^{2-\ga}\De\psi \nonumber
\nonumber\\&=
-\frac{R^{2-\ga}}{\ga-2}\int_{M} \De\psi  + \frac{1}{\ga-2}\int_{M} r^{2-\ga}\De\psi\nonumber. 
\end{align}
By (\ref{eq-rel-De}), (\ref{eq-rel-radius}) and (\ref{eq-rel-fin}), we obtain
\begin{equation*}
0 \le \int_M \frac{1}{r^{\ga-2}}(|\De\psi| + \De\psi) = \int_M \frac{2}{r^{\ga-2}}(\De\psi)^+   
\end{equation*}
Thus, using that $|\De\psi|=(\De\psi)^+ + (\De\psi)^-$ and $\De\psi=(\De\psi)^+ - (\De\psi)^-$, the equality in (\ref{ineq-prop-p=1}) holds if and only if $\De\psi\le 0$ in $M$. 
\end{proof}

\section{Applications}
We recall that an isometric immersion $f:M\to \real^n$ is said to be self-shrinker if the mean curvature vector satisfies \begin{equation*}
H=-\frac{1}{2} f^\perp.
\end{equation*} 
In this case, the radial vector $X=r\bar\na r$ coincides with $f$, in particular the support function $\lan H,\bar\na r\ran =-\frac{r}{2}|\bar\na r^\perp|^2$. By Theorem \ref{teo-hardy}, it follows 
\begin{proposition}\label{cor-self-shinker} Let $M^k$ be a closed manifold isometrically immersed in a Hadamard manifold $\bar M$. Let $r = d_{\bar M}(\cdot\,,\xi)$ be the distance in $\bar M$ from any fixed point $\xi$. Assume the mean curvature of $M$ satisfies 
\begin{equation}\label{cond-self-shrinker}
\lan H,\bar\na r\ran \ge -\frac{\la r}{2}|\bar\na r^\perp|^2,
\end{equation}
for some $\la>0$. Then, $\sup_M r^2\ge 2k/\la$.
\end{proposition}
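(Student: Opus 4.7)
The plan is to apply Theorem \ref{teo-hardy} to the constant test function $\psi\equiv 1$ with the parameter choices $p=1$ and $\gamma=0$. A closed manifold is compact with empty boundary, so the boundary integral in (\ref{des-hardy}) drops out automatically, and because $\nabla^M\psi=0$ the gradient integral on the right also vanishes. With $\gamma=0$ there is no singularity in $r^{-\gamma}=1$ even if $\xi\in M$, so integrability is not an issue; the hypothesis $-\infty<\gamma<k$ is trivially satisfied.

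With these substitutions, and using $\Phi=\gamma|\bar\nabla r^\perp|^2+r\langle\bar\nabla r,H\rangle=r\langle\bar\nabla r,H\rangle$ when $\gamma=0$, Theorem \ref{teo-hardy} reduces to
\[
k\,\vol(M)+\int_M\Phi^+\;\le\;\int_M\Phi^-,
\]
equivalently
\[
k\,\vol(M)\;\le\;-\int_M\Phi\;=\;-\int_M r\langle\bar\nabla r,H\rangle.
\]

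Next I would feed in the structural assumption (\ref{cond-self-shrinker}). Multiplying that inequality by $r$ gives $r\langle H,\bar\nabla r\rangle\ge -\tfrac{\lambda r^2}{2}|\bar\nabla r^\perp|^2$, hence
\[
-r\langle\bar\nabla r,H\rangle\;\le\;\tfrac{\lambda r^2}{2}|\bar\nabla r^\perp|^2\;\le\;\tfrac{\lambda r^2}{2},
\]
where the last bound uses $|\bar\nabla r^\perp|\le|\bar\nabla r|=1$. Combining with the previous display,
\[
k\,\vol(M)\;\le\;\tfrac{\lambda}{2}\int_M r^2\;\le\;\tfrac{\lambda}{2}\bigl(\sup_M r^2\bigr)\vol(M).
\]
Dividing by the positive quantity $\vol(M)$ yields $\sup_M r^2\ge 2k/\lambda$, as claimed.

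There is no real obstacle here beyond identifying the right specialization of the master Hardy inequality: the constant function at the exponents $(p,\gamma)=(1,0)$ kills both the gradient and the boundary contributions and turns $\Phi$ into exactly the support-function quantity controlled by the hypothesis. The only points to verify are the admissibility of $(p,\gamma)=(1,0)$ in Theorem \ref{teo-hardy} and the disappearance of $\partial M$, both of which are immediate from the statement and the hypothesis that $M$ is closed.
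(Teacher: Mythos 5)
Your proof is correct, but it takes a different route from the paper's. The paper argues by contradiction: assuming $\sup_M r^2<2k/\la$, it picks $\ga$ with $\la\sup_M r^2/2<\ga<k$ and uses the hypothesis to show $\Phi=\ga|\bar\na r^\perp|^2+r\lan\bar\na r,H\ran\ge(\ga-\tfrac{\la r^2}{2})|\bar\na r^\perp|^2\ge 0$ pointwise, so $\Phi^-=0$ and Theorem \ref{teo-hardy} with $\psi\equiv 1$ collapses to ``(positive quantity) $\le 0$''. You instead fix $(p,\ga)=(1,0)$, which makes $\Phi$ exactly the support function $r\lan\bar\na r,H\ran$, and run the inequality forward to the quantitative estimate $k\,\vol(M)\le\tfrac{\la}{2}\int_M r^2$, from which the sup bound follows by dividing by $\vol(M)$. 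Both arguments rest on the same specialization $\psi\equiv 1$ of the master Hardy inequality and both are valid; the paper's version keeps the free parameter $\ga$ and isolates the sign of $\Phi$, while yours trades that flexibility for a cleaner direct computation and actually proves the slightly stronger averaged statement $\frac{1}{\vol(M)}\int_M r^2\ge 2k/\la$, of which $\sup_M r^2\ge 2k/\la$ is an immediate consequence. All the admissibility checks you flag ($p=1$, $\ga=0<k$, empty boundary, $|\bar\na r^\perp|\le 1$) are correct.
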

\begin{proof} By contradiction, we assume $\sup_M r^2<2k/\la$ and choose $\ga<k$ so that $\sup_M r^2<2\ga/\la$.  
Thus, 
\begin{equation*}
\Phi  = \ga|\bar \na r^\perp|^2 + r\lan \bar\na r,H\ran \ge (\ga - \frac{\la r^2}{2})|\bar \na r^\perp|^2 > 0.
\end{equation*}
So, $\Phi^-=0$. Applying Theorem \ref{teo-hardy} with $\psi=1$, we obtain a contradiction. 
\end{proof}
We remark that Proposition \ref{cor-self-shinker} is optimal. 
In fact, the round $k$-dimensional hypersphere $S^{k}(0,{\sqrt{2k}})\subset\real^{k+1}$ of radius $\sqrt{2k}$ and centered at the origin is self-shrinker. So, the equality in (\ref{cond-self-shrinker}) holds with $\la=1$ and $r^2=2k$.
\\

We define the {\it extrinsic diameter} $D(M)$ of a submanifold $M$ in  $\bar M$ as the infimum of the diameter among all geodesic ball in $\bar M$ that contains $M$.

\begin{theorem}\label{est-first-eigenv} Let $M^k$ be a  compact manifold with boundary $\p M\neq \emptyset$ isometrically immersed in a Hadamard manifold $\bar M$ and let $D = D(M)$ be the extrinsic diameter of $M$. Then, the first eigenvalue of the Laplacian of $M$ satisfies
\begin{equation*}
\la_1(M) \ge \frac{k^2}{D^2}(1 - \frac{D}{k}\sup_M|H|).
\end{equation*}
\end{theorem}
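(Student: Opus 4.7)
The plan is to apply Theorem \ref{teo-hardy} with the specialization $p = 2$ and $\ga = 0$ to the first Dirichlet eigenfunction and then use the extrinsic-diameter bound on $r$. Recall the Rayleigh-quotient characterization
\[
\la_1(M) = \inf_{\psi}\, \frac{\int_M |\na^M \psi|^2}{\int_M \psi^2},
\]
the infimum taken over nonzero $\psi\in C^1(M)$ vanishing on $\p M$; by replacing $\psi$ with $|\psi|$ we may restrict to $\psi\ge 0$.

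Given $\ep>0$, by the definition of $D=D(M)$ we may choose $\xi\in\bar M$ so that $M$ is contained in the closed geodesic ball of radius $R:=D/2+\ep$ around $\xi$. Let $r=d_{\bar M}(\cdot,\xi)$, so that $r\le R$ on $M$. With $\ga=0$ and $p=2$, the coefficient $(k-\ga)^p/p^p$ equals $k^2/4$, $(k-\ga)^{p-1}/p^{p-1}$ equals $k/2$, and $\Phi=r\lan\bar\na r,H\ran$. Theorem \ref{teo-hardy} then reads
\begin{align*}
\frac{k^2}{4}\int_M \psi^2 + \frac{k}{2}\int_M \psi^2\Phi^+
&\le \int_M r^2|\na^M \psi|^2 + \frac{k}{2}\int_M \psi^2\Phi^- \\
&\quad + \frac{k}{2}\int_{\p M} r\,\psi^2\lan \bar\na r,\nu\ran.
\end{align*}
The boundary integral vanishes because $\psi=0$ on $\p M$. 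Dropping the nonnegative term $\Phi^+$ and applying $\Phi^-\le r|H|$ (Cauchy--Schwarz with $|\bar\na r|=1$), together with $r\le R$, yields
\[
\frac{k^2}{4}\int_M \psi^2 \le R^2\int_M |\na^M \psi|^2 + \frac{kR}{2}\sup_M|H|\int_M \psi^2.
\]
Dividing by $R^2\int_M \psi^2$ and rearranging,
\[
\frac{\int_M |\na^M\psi|^2}{\int_M \psi^2}\ \ge\ \frac{k^2}{4R^2}\left(1-\frac{2R}{k}\sup_M|H|\right).
\]
Taking the infimum over admissible $\psi$ and letting $\ep\to 0^+$ (so $R\to D/2$) produces the desired bound $\la_1(M)\ge\frac{k^2}{D^2}\bigl(1-\frac{D}{k}\sup_M|H|\bigr)$.

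There is no real obstacle: once the exponents $(p,\ga)=(2,0)$ are picked, the rest is algebra plus the trivial estimate $r\le R$. It is worth observing that because $\ga=0$ and $\ga-p=-2$, none of the integrands carries a radial singularity at $\xi$, so the inequality is valid even if $\xi\in M$ and we need not appeal to the blowup/convergence analysis of Section \ref{sec-eq-blw}. The only minor care is confirming that $\psi$ (smooth on $\bar M$ with $\psi|_{\p M}=0$ for a compact $M$ with smooth boundary) genuinely lies in the admissible class of Theorem \ref{teo-hardy}, which it does.
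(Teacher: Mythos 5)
Your proposal is correct and follows essentially the same route as the paper: apply Theorem \ref{teo-hardy} with $(p,\ga)=(2,0)$, kill the boundary term using $\psi|_{\p M}=0$, bound $\Phi^-$ by $r|H|\le \frac{D}{2}\sup_M|H|$, and conclude via the Rayleigh quotient. Your only addition is the $\ep$-approximation of the extrinsic diameter (since the infimum defining $D(M)$ need not be attained), which is a harmless and slightly more careful touch than the paper's direct use of $r\le D/2$.
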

\begin{proof} Let $0\le \psi \in C^1(M)$, with $\psi\not\equiv 0$ and $\psi=0$ on $\p M$. Using Theorem \ref{teo-hardy}, with $\ga=0$ and $p=2$, we obtain 
\begin{equation*}
\frac{k^2}{4}\int_M \psi^2 + \frac{k}{2}\int_M \psi^2\Phi \le \int_M r^2|\na^M \psi|^2 \le \frac{D^2}{4}\int_M |\na^M \psi|^2. 
\end{equation*}
Note that, in this case, $\Phi = r\lan\bar \na r, H\ran \ge -\frac{D}{2}\sup_M|H|$. Hence,
\begin{equation*}
\frac{k^2}{4}\int_M \psi^2 - \frac{kD}{4}\sup_M|H| \int_M \psi^2 \le \int_M r^2|\na^M \psi|^2 \le \frac{D^2}{4}\int_M |\na^M \psi|^2.
\end{equation*}
Since $\la_1(M)=\inf_{\mathcal F} \frac{\int_M |\na^M\psi|^2}{\int_M \psi^2}$, where $\mathcal F$ is the set of all $0\le \psi\in C^1(M)$, with $\psi\not\equiv 0$ and $\psi=0$ on $\p M$. We obtain
\begin{equation*}
\frac{k^2}{4} - \frac{kD}{4}\sup_M|H| \le \frac{D^2}{4}\la_1(M).
\end{equation*}
Theorem \ref{est-first-eigenv} follows. 
\end{proof}

%$$$$
%$$$$
%$$$$
%\begin{theorem} Let $M\subset \real^n$ be a compact submanifold with boundary $\p M$. Assume the support function $g = \lan r\bar\na r, H\ran$ is bounded from below and assume $M$ is contained in a ball $B_r\subset \real^n$.  Then, the first eigenvalue $\la_1(M)$ of $M$ must satisfies: \begin{equation*} \la_1(M) > \frac{1}{4 R^2}\big[(k-2)^{\frac{4k}{k+2}}- {2^{\frac{4k}{k+2}}} \sup_M g^-  \big]^\frac{k+2}{2k}, \end{equation*}  where $g^-$ is the negative part of $g$. \end{theorem} \begin{proof} Let $\psi$ be the first eigenfunction of $M$. This function satisfies $\psi>0$ in $M\setminus \p M$, $\psi=0$ on $\p M$ and $\De_M \psi + \la_1 \psi = 0$. By Corollary \ref{cor_rellich},  \begin{equation*} \frac{A}{R^{2p}}\int_M \psi^p < A\int_M \frac{\psi^p}{r^{2p}} \le \int_M |\De\psi|^p = |\la_1|^p \int_M \psi^p, \end{equation*} where $(\frac{k^p}{p^{2p}}(k-2p)^p(p-1)^p)$. Hence,  \begin{equation*} R^2|\la_1|>\frac{k}{p^{2}}(k-2p)(p-1).\end{equation*} Maximizing the right side of the above inequality, with respect to $p$, we obtain \begin{equation*} \la_1 > \frac{(k-2)^2}{4R^2} \end{equation*} \end{proof}

\end{document}